\newtheorem{Theorem}[equation]{Theorem}
\newtheorem{Proposition}[equation]{Proposition}
\newtheorem{Definition}[equation]{Definition}
\theoremstyle{remark}
\newtheorem{Remark}[equation]{Remark}
\numberwithin{equation}{section}
\DeclareMathOperator{\End}{End}
\DeclareMathOperator{\id}{id}
\DeclareMathOperator{\ad}{ad}
\DeclareMathOperator{\row}{row}
\DeclareMathOperator{\col}{col}
\newcommand{\ve}{\varepsilon}
\begin{document}
\title{Notes on a homomorphism from the affine Yangian associated with $\widehat{\mathfrak{sl}}(n)$ to the affine Yangian associated with $\widehat{\mathfrak{sl}}(n+1)$}
\author{Mamoru Ueda}
\date{}
\maketitle
\begin{abstract}
In the previous paper \cite{U8}, we constructed a homomorphism from the affine Yangian associated with $\widehat{\mathfrak{sl}}(n)$ to the standard degreewise completion of the affine Yangian associated with $\widehat{\mathfrak{sl}}(n+1)$. In this article, by using this homomorphism, we construct homomorphisms from the affine Yangian to the universal enveloping algebra of a non-rectangular $W$-algebra of type $A$, which are different from the one in \cite{U7}.  Based on this result, we discuss a new definition of the affine shifted Yangian, from which it is expected that there exists a homomorphism to the universal enveloping algebra of a non-rectangular $W$-algebra of type $A$. In the appendix, we also show that this homomorphism is related to the one from the quantum affine algebra associated with $\widehat{\mathfrak{sl}}(n)$ to the quantum affine algebra associated with $\widehat{\mathfrak{sl}}(n+1)$ given by Li \cite{Li}.
\\
\qquad{\bf Affiliation;} Graduate School of Mathematical Sciences, The University of Tokyo\\ 
\qquad{\bf Address;} 3-8-1 Komaba, Meguro-ku, Tokyo 153-8914, Japan\\
\qquad{\bf E-mail address;} mueda@ms.u-tokyo.ac.jp\\
\end{abstract}

\section{Introduction}

For $\hbar\in\mathbb{C}$, Drinfeld (\cite{D1}, \cite{D2}) introduced the Yangian $Y_\hbar(\mathfrak{g})$ associated with a finite dimensional simple Lie algebra $\mathfrak{g}$. The Yangian $Y_\hbar(\mathfrak{g})$ is a quantum group which is a deformation of the current algebra $\mathfrak{g}\otimes\mathbb{C}[z]$. For $\hbar,\ve\in\mathbb{C}$, Guay \cite{Gu2} defined the 2-parameter affine Yangian $Y_{\hbar,\ve}(\widehat{\mathfrak{sl}}(n))$, which is a deformation of the universal enveloping algebra of the central extension of $\mathfrak{sl}(n)\otimes\mathbb{C}[u^{\pm1},v]$. In the previous paper \cite{U8}, the author constructed a homomorphism, for $n<m$,

\begin{equation*}
\Psi^{n,m}\colon Y_{\hbar,\ve}(\widehat{\mathfrak{sl}}(n))\to \widetilde{Y}_{\hbar,\ve}(\widehat{\mathfrak{sl}}(m)),
\end{equation*}
where $\widetilde{Y}_{\hbar,\ve}(\widehat{\mathfrak{sl}}(n))$ is the standard degreewise completion of the affine Yangian $Y_{\hbar,\ve}(\widehat{\mathfrak{sl}}(n))$. In this article, we consider a relationship between the affine Yangian $Y_{\hbar,\ve}(\widehat{\mathfrak{sl}}(n))$ and a $W$-algebra of type $A$ by using $\Psi^{n,m}$. 

A $W$-algebra $\mathcal{W}^k(\mathfrak{g},f)$ associated with a finite dimensional reductive Lie algebra $\mathfrak{g}$, a nilpotent element $f\in\mathfrak{g}$ and a complex number $k\in\mathbb{C}$ is a vertex algebra defined by the quantum Drinfeld-Sokolov reduction.
It appeared in the study of two dimensional conformal field theories (\cite{Z}).
We call a $W$-algebra associated with $\mathfrak{gl}(n)$ (resp. $\mathfrak{gl}(ln)$) and a principal nilpotent element (resp. a nilpotent element of type $(l^n)$) a principal (resp. rectangular) $W$-algebra of type $A$. 

Recently, the relationship between a $W$-algebra and the Yangian has been studied by mathematicians and physicists. 
Schiffmann and Vasserot \cite{SV} gave the representation of the principal $W$-algebra of type $A$ on the equivariant homology space of the moduli space of $U(r)$-instantons by using an action of the affine Yangian associated with $\widehat{\mathfrak{gl}}(1)$ on this homology space. From the perspective of physics, Schiffmann-Vasserot's \cite{SV} results correspond to the solution of the AGT (Alday-Gaiotto-Tachikawa) conjecture. 
As for rectangular cases, the author \cite{U4} constructed a surjective homomorphism from the affine Yangian to the universal enveloping algebra of a rectangular $W$-algebra of type $A$.

In non-rectangular cases, Creutzig-Diaconescu-Ma \cite{CE} conjectured that a geometric representation of an iterated $W$-algebra of type $A$ on the equivariant homology space of the affine Laumon space will be given through an action of the shifted affine Yangian constructed in \cite{FT}. Creutzig-Diaconescu-Ma's conjecture corresponds to the generalized AGT conjecture. Based on this conjecture, it is expected that there exists a non-trivial homomorphism from the shifted affine Yangian to the universal enveloping algebra of an iterated $W$-algebra associated with any nilpotent element.
However, the defining relations of the shifted affine Yangian are so complicated that it is difficult to solve this homomorphism directly. 

In finite setting, the relationship between the finite Yangian and a finite $W$-algebra has been studied. A finite $W$-algebra $\mathcal{W}^{\text{fin}}(\mathfrak{g},f)$ is an associative algebra associated with a finite dimensional reductive Lie algebra $\mathfrak{g}$ and a nilpotent element $f$. Brundan-Kleshchev \cite{BK} wrote down the finite $W$-algebra of type $A$ as a quotient algebra of the shifted Yangian of type $A$, a subalgebra of the finite Yangian of type $A$ associated with anti-dominant weight. The shifted Yangian contain finite Yangians of type $A$. By restricting the homomorphism of Brundan-Kleshchev \cite{BK} to these finite Yangians, we obtain homomorphisms from a finite Yangian of type $A$ to the finite $W$-algebra of type $A$. De Sole-Kac-Valeri \cite{DKV} gave one of these homomorphisms by using the Lax operator.

In Section~4, we give the affine version of this homomorphism.
Let us take a positive integer $s$, integers $q_1>q_2>q_3>\cdots>q_{s-1}>q_s>0(=q_{s+1})$ and positive integers $\{l_u\}_{u=1}^s$ and fix a nilpotent element $f_{q_1,q_2,\cdots,q_s}^{l_1,l_2\cdots,l_s}\in\mathfrak{gl}(\sum_{v=1}^s\limits q_vl_v)$ which is of type 
\begin{equation*}
(l_1^{q_1-q_2},(l_1+l_2)^{q_2-q_3},\cdots,(\sum_{v=1}^{s-1}l_v)^{q_{s-1}-q_{s}},(\sum_{v=1}^{s}l_v)^{q_s}).
\end{equation*} 
\begin{Theorem}\label{A}
For $1\leq u\leq s$, we suppose that $q_u-q_{u+1}\geq3$ and $\dfrac{\ve+q_u\hbar}{\hbar}=k+\sum_{v=1}^s\limits l_vq_v$. Then, we can obtain the homomorphism
\begin{equation*}
\Phi_u\colon Y_{\hbar,\ve}(\widehat{\mathfrak{sl}}(q_u-q_{u+1}))\to \mathcal{U}(\mathcal{W}^k(\mathfrak{gl}(\sum_{v=1}^s\limits l_vq_v),f_{q_1,q_2,\cdots,q_s}^{l_1,l_2\cdots,l_s})),
\end{equation*}
where $\mathcal{U}(\mathcal{W}^k(\mathfrak{gl}(\sum_{v=1}^s\limits l_vq_v),f_{q_1,q_2,\cdots,q_s}^{l_1,l_2\cdots,l_s}))$ means the universal enveloping algebra of a $W$-algebra \\$\mathcal{W}^k(\mathfrak{gl}(\sum_{v=1}^s\limits l_vq_v),f_{q_1,q_2,\cdots,q_s}^{l_1,l_2\cdots,l_s})$.
\end{Theorem}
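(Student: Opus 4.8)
\emph{Strategy.} Set $N=\sum_{v=1}^{s}l_vq_v$ and write $f=f_{q_1,q_2,\cdots,q_s}^{l_1,l_2\cdots,l_s}$. The plan is to obtain $\Phi_u$ as the composite
\begin{equation*}
Y_{\hbar,\ve}(\widehat{\mathfrak{sl}}(q_u-q_{u+1}))\xrightarrow{\ \Psi^{q_u-q_{u+1},\,q_u}\ }\widetilde{Y}_{\hbar,\ve}(\widehat{\mathfrak{sl}}(q_u))\xrightarrow{\ \widehat{\Theta}_u\ }\mathcal{U}(\mathcal{W}^k(\mathfrak{gl}(N),f)),
\end{equation*}
in which $\Psi^{q_u-q_{u+1},q_u}$ is the homomorphism of \cite{U8} (for $u=s$ one has $q_{s+1}=0$, the two indices coincide, and this factor is simply the identity), and $\widehat{\Theta}_u$ is a homomorphism from the standard degreewise completion of $Y_{\hbar,\ve}(\widehat{\mathfrak{sl}}(q_u))$ to the universal enveloping algebra of the $W$-algebra. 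Because $\mathcal{U}(\mathcal{W}^k(\mathfrak{gl}(N),f))$ is already complete for its natural topology, any such $\widehat{\Theta}_u$ lands inside $\mathcal{U}(\mathcal{W}^k(\mathfrak{gl}(N),f))$ itself, so $\Phi_u:=\widehat{\Theta}_u\circ\Psi^{q_u-q_{u+1},q_u}$ is a homomorphism with exactly the source and target claimed, and verifying Theorem~\ref{A} reduces to producing $\widehat{\Theta}_u$.

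The appearance of $q_u$ is forced by the shape of $f$: precisely $q_u$ of its parts have size at least $\sum_{v=1}^{u}l_v$, so the pyramid of $f$ contains a $q_u\times(\sum_{v=1}^{u}l_v)$ rectangular sub-pyramid. I would build $\widehat{\Theta}_u$ from the surjective homomorphism of \cite{U4} attached to that rectangular block, that is, from a homomorphism $Y_{\hbar,\ve}(\widehat{\mathfrak{sl}}(q_u))\to\mathcal{U}(\mathcal{W}^{k_u}(\mathfrak{gl}(q_u\sum_{v=1}^{u}l_v),f_{\mathrm{rect}}))$, together with the free-field (Miura-type) realization of $\mathcal{W}^k(\mathfrak{gl}(N),f)$, inside which the substructure attached to the rectangular sub-pyramid is identified with this rectangular $W$-algebra; equivalently, one writes the images of the Drinfeld generators $x_{i,r}^{\pm},h_{i,r}$ of $Y_{\hbar,\ve}(\widehat{\mathfrak{sl}}(q_u))$ as explicit elements built from the strong generators of $\mathcal{W}^k(\mathfrak{gl}(N),f)$, an affine analogue of the Lax-operator recipe used in the finite case by \cite{DKV}. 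The level shift produced by this identification, combined with the parameter constraint that \cite{U4} imposes on the block at the shifted level $k_u$, collapses to the single equality $\dfrac{\ve+q_u\hbar}{\hbar}=k+N$ of the hypothesis; and the assumption $q_u-q_{u+1}\ge 3$ (note $3\le q_u-q_{u+1}\le q_u$, with equality in the second inequality only for $u=s$, and hence also $q_u\ge 3$) is what guarantees that $Y_{\hbar,\ve}(\widehat{\mathfrak{sl}}(q_u-q_{u+1}))$ and $Y_{\hbar,\ve}(\widehat{\mathfrak{sl}}(q_u))$ both carry the usual Drinfeld presentation and that $\Psi^{q_u-q_{u+1},q_u}$ is available. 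Concretely the steps are: recall $\Psi^{q_u-q_{u+1},q_u}$ from \cite{U8}; recall \cite{U4} for $f_{\mathrm{rect}}$ and rewrite its parameter condition at level $k_u$; write down the candidate images in $\mathcal{U}(\mathcal{W}^k(\mathfrak{gl}(N),f))$ and check that they satisfy the Drinfeld relations of $Y_{\hbar,\ve}(\widehat{\mathfrak{sl}}(q_u))$, thereby defining $\widehat{\Theta}_u$; check that $\widehat{\Theta}_u$ is continuous for the degreewise topology, so that it extends to $\widetilde{Y}_{\hbar,\ve}(\widehat{\mathfrak{sl}}(q_u))$; and finally compose.

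The main obstacle is the relation-check that produces $\widehat{\Theta}_u$: one must confirm that the proposed images satisfy \emph{all} the defining relations of $Y_{\hbar,\ve}(\widehat{\mathfrak{sl}}(q_u))$ — the delicate ones being the quadratic relations linking $h_{i,1}$ with $x_{j,0}^{\pm}$ and the cubic Serre relations — when the generators are expressed through the $W$-algebra, and then that after pre-composing with $\Psi^{q_u-q_{u+1},q_u}$ the images of the generators of $Y_{\hbar,\ve}(\widehat{\mathfrak{sl}}(q_u-q_{u+1}))$, which are honestly infinite series in the completion $\widetilde{Y}_{\hbar,\ve}(\widehat{\mathfrak{sl}}(q_u))$, still yield well-defined elements of $\mathcal{U}(\mathcal{W}^k(\mathfrak{gl}(N),f))$. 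This bookkeeping inside the completion is where $q_u-q_{u+1}\ge 3$ is genuinely needed, since it makes the Serre relations to be checked the generic ones already handled by $\Psi^{n,m}$. Lastly, to see that $\Phi_u$ is really a new homomorphism and not the one of \cite{U7}, I would compare the images of the degree-zero Drinfeld generators.
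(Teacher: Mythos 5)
Your overall shape ($\Phi_u=(\text{second factor})\circ\Psi^{q_u-q_{u+1},q_u}$) matches the paper, but the second factor you propose does not exist as stated, and its construction is exactly the missing content. You posit a homomorphism $\widehat{\Theta}_u$ defined on (the completion of) $Y_{\hbar,\ve}(\widehat{\mathfrak{sl}}(q_u))$ with target $\mathcal{U}(\mathcal{W}^k(\mathfrak{gl}(N),f_{q_1,\cdots,q_s}^{l_1,l_2\cdots,l_s}))$, i.e.\ the enveloping algebra of the \emph{full} non-rectangular $W$-algebra, to be obtained by identifying, inside the Miura realization, the ``substructure attached to the $q_u\times\sum_{v\leq u}l_v$ rectangular sub-pyramid'' with the rectangular $W$-algebra of \cite{U4}. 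No such identification is available and it is not expected to hold: for $u<s$ the strong generators $W^{(1)}_{i,j},W^{(2)}_{i,j}$ whose row indices run over the last $q_u$ rows straddle several blocks, and their OPEs are not those of the rectangular case — Theorem~\ref{Tho1}~(2) produces the extra quadratic term $\delta(j>q_1-q_a,\,i\leq q_1-q_a)(W^{(1)}_{i,q})_{(-1)}W^{(1)}_{p,j}$ precisely when the indices cross a block boundary, and the computations in Section~7 show that naive cross-block combinations of generators fail to satisfy the expected Yangian-type relations. So the existence of $\widehat{\Theta}_u$ is a strictly stronger, unproved claim than the theorem itself; neither \cite{U4} nor the Lax-operator recipe of \cite{DKV} supplies it, and the ``relation-check'' you defer is the whole difficulty.

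What the paper actually proves is weaker and avoids this. For $u=s$ it re-indexes the homomorphism $\Phi$ of \cite{U7} (Theorem~\ref{Ue}), replacing $\widehat{W}^{(r),s}_{i,j}$ by $\widetilde{W}^{(r)}_{i,j}$, which is legitimate because the structure constants $f^{x,y}_{A,B}(i,j,p,q)$ have the same form after applying $\widetilde{\mu}$. For $u<s$ it applies that result to the \emph{truncated} datum $(q_1,\cdots,q_u)$, obtaining $\Phi^{q_1,\cdots,q_u}_u\colon Y_{\hbar,\ve_u}(\widehat{\mathfrak{sl}}(q_u))\to\mathcal{U}(\mathcal{W}^{k+\sum_{v>u}l_vq_v}(\mathfrak{gl}(\sum_{i\leq u}l_iq_i),f_{q_1,\cdots,q_u}^{l_1,\cdots,l_u}))$ — the $W$-algebra of a smaller $\mathfrak{gl}$ at a shifted level (this is where the single condition $\frac{\ve_u+q_u\hbar}{\hbar}=k+N$ comes from), not of $\mathfrak{gl}(N)$ — composes with $\Psi^{q_u-q_{u+1},q_u}$, and only then compares the composite, via the natural embedding $\sigma$ of completed tensor products and the injective Miura map $\widetilde{\mu}$, with the explicit candidate formulas for $\Phi_u$ written in the generators $\widetilde{W}^{(r),u}_{i,j}$ of the full $W$-algebra; the identity $\widetilde{\mu}\circ\Phi_u=\sigma\circ\Phi^{q_1,\cdots,q_u}_u\circ\Psi^{q_u-q_{u+1},q_u}$ together with injectivity of $\widetilde{\mu}$ gives well-definedness. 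If you replace your $\widehat{\Theta}_u$ by this truncated-level map and add the Miura-map comparison, your argument becomes the paper's; as written, the central object of your proof is not constructed.
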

In \cite{U7}, we have already given the homomorphism $\Phi_u$. In this article, we construct other homomorphisms by using $\Psi^{n,m}$ and the proof of \cite{U7}. In \cite{U12}, the author has given another proof of Theorem~\ref{A} by using a homomorphism
\begin{equation*}
\widetilde{\Psi}^{n,m}\colon Y_{\hbar,\ve+(m-n)\hbar}(\widehat{\mathfrak{sl}}(n))\to \widetilde{Y}_{\hbar,\ve}(\widehat{\mathfrak{sl}}(m)),
\end{equation*}
which is a different homomorphism from $\Psi^{n,m}$. The proof in this article does not need $\widetilde{\Psi}^{m-n,m}$ and is shorter than the one in \cite{U12}.
Similarly to the finite setting, it is expected that we can obtain a homomorphism from the shifted affine Yangian to the universal enveloping algebra of the iterated $W$-algebra by extending homomorphisms $\Phi_u$.

In section~5, we will show how the homomorphism $\Phi_u$ induces a restriction of the homomorphism in \cite{BK} to finite Yangians. In Section 6, we also note how $\Psi^{n,n+1}$ is related to the map
\begin{equation*}
\Psi^{n,n+1}_f\colon Y_\hbar(\mathfrak{sl}(n))\to Y_\hbar(\mathfrak{sl}(n+1))
\end{equation*}
given by $H_{i,r}\mapsto H_{i,r}$ and $X^\pm_{i,r}\mapsto X^\pm_{i,r}$ in the current presentation of the finite Yangian $Y_\hbar(\mathfrak{sl}(n))$.
Li \cite{Li} constructed an embedding from the quantum affine algebra associated with $\widehat{\mathfrak{sl}}(n)$ to the quantum affine algebra associated with $\widehat{\mathfrak{sl}}(n+1)$. In \cite{Li2}, Li gave the similar embedding for the quantum group associated with a general symmetrizable Kac-Moody Lie algebra. In the appendix, we rewrite the homomorphism in \cite{Li} and relate it to the homomorphism $\Psi^{n,n+1}_f$.
In \cite{Li3}, Li also showed that the embedding is compatible with the Schur-Weyl duality for quantum affine algebra. We give the relationship between the homomorphism $\Psi^{n,n+1}_f$ and the Schur-Weyl duality. 

There exist another version of Creutzig-Diaconescu-Ma's conjecture, which notes that there exists a surjective homomorphism from the shifted affine Yangian to the universal enveloping algebra of a non-rectangular $W$-algebra of type $A$ for a properly defined shifted affine Yangian. Let us consider the case that $s=2$. Naturally, it is expected that we can extend homomorphisms $\Phi_1$ and $\Phi_2$ directly to the subalgebra of $Y_{\hbar,\ve}(\widehat{\mathfrak{sl}}(q_1))$ generated by $\Psi^{q_1-q_2,q_1}(Y_{\hbar,\ve}(\widehat{\mathfrak{sl}}(q_1-q_2)))$, $\widetilde{\Psi}^{q_2,q_1}(Y_{\hbar,\ve+(q_1-q_2)\hbar}(\widehat{\mathfrak{sl}}(q_2)))$, $X^+_{m-n,0}$ and $\{E_{m-n,m-n}t^{y}-E_{m-n+1,m-n+1}t^y\mid y\in\mathbb{Z}\}$ and this extension becomes a restriction of the homomorphism of Creutzig-Diaconescu-Ma's conjecture. Unfortunately, this expectation is not correct.
In Section~7, we show that some relations of the generators of these algebras are not satisfied by using OPEs computed in Section~3. 
We expect that these results of computation will lead to the new definition of the shifted affine Yangian.

\section*{Acknowledgement}
The author wishes to express his sincere gratitude to Yiqiang Li. This paper is motivated by his comment on \cite{U8}. The author is also grateful to Nicolas Guay and Thomas Creutzig for the helpful discussion. 

\section{Affine Yangian}
In this section, we recall the definition of the affine Yangian of type $A$. We identify $\{0,1,2,\cdots,n-1\}$ with $\mathbb{Z}/n\mathbb{Z}$. Let us set
\begin{align*}
a_{i,j}&=\begin{cases}
2&\text{if } i=j, \\
-1&\text{if }j=i\pm 1,\\
0&\text{otherwise.}
	\end{cases}
\end{align*}
\begin{Definition}[Definition 3.2 in \cite{Gu2} and Definition 2.1 in \cite{U8}]
Suppose that $n\geq3$. The affine Yangian $Y_{\hbar,\ve}(\widehat{\mathfrak{sl}}(n))$ is isomorphic to the associative algebra generated by 
\begin{equation*}
\{X_{i,r}^{+}, X_{i,r}^{-}, H_{i,r}\mid 0\leq i\leq n-1, r\in\mathbb{Z}_{\geq0}\}
\end{equation*} 
subject to the following defining relations:
\begin{gather}
[H_{i,r}, H_{j,s}] = 0,\label{Eq1.1}\\
[X_{i,r}^{+}, X_{j,s}^{-}] = \delta_{i,j} H_{i,r+s},\label{Eq1.2}\\
[H_{i,0}, X_{j,r}^{\pm}] = \pm a_{i,j} X_{j,r}^{\pm},\label{Eq1.4}\\
[H_{i,r+1}^{\pm}, X_{j, s}^{\pm}] - [H_{i, r}^{\pm}, X_{j, s+1}^{\pm}] = \pm a_{i,j}\dfrac{\hbar}{2} \{H_{i,r}^{\pm}, X_{j,s}^{\pm}\}\text{ if }(i,j)\neq(0,n-1),(n-1,0),\label{Eq1.5}\\
[H_{0, r+1}^{\pm}, X_{n-1, s}^{\pm}] - [H_{0, r}^{\pm}, X_{n-1, s+1}^{\pm}]= \mp\dfrac{\hbar}{2} \{H_{0, r}^{\pm}, X_{n-1, s}^{\pm}\} + (\ve+\dfrac{n}{2}\hbar) [H_{0, r}^{\pm}, X_{n-1, s}^{\pm}],\label{Eq1.6}\\
[H_{n-1, r+1}^{\pm}, X_{0, s}^{\pm}] - [H_{n-1, r}^{\pm}, X_{0, s+1}^{\pm}]= \mp\dfrac{\hbar}{2} \{H_{n-1, r}^{\pm}, X_{0, s}^{\pm}\} - (\ve+\dfrac{n}{2}\hbar) [H_{n-1, r}^{\pm}, X_{0, s}^{\pm}],\label{Eq1.7}\\
[X_{i,r+1}^{\pm}, X_{j, s}^{\pm}] - [X_{i, r}^{\pm}, X_{j, s+1}^{\pm}] = \pm a_{i,j}\dfrac{\hbar}{2} \{X_{i,r}^{\pm}, X_{j,s}^{\pm}\}\text{ if }(i,j)\neq(0,n-1),(n-1,0),\label{Eq1.8}\\
[X_{0, r+1}^{\pm}, X_{n-1, s}^{\pm}] - [X_{0, r}^{\pm}, X_{n-1, s+1}^{\pm}]= \mp\dfrac{\hbar}{2} \{X_{0, r}^{\pm}, X_{n-1, s}^{\pm}\} + (\ve+\dfrac{n}{2}\hbar) [X_{0, r}^{\pm}, X_{n-1, s}^{\pm}],\label{Eq1.9}\\
\sum_{\sigma\in S_{1-a_{i,j}}}(\ad X_{i,r_{\sigma(1)}}^{\pm})\cdots(\ad X_{i,r_{\sigma(1-a_{i,j})}}^{\pm}) (X_{j,s}^{\pm})= 0 \ \text{ if }i \neq j, \label{Eq1.10}
\end{gather}
where $S_l$ is the permutaion group of degree $l$.
\end{Definition}
In \cite{GNW}, Guay-Nakajima-Wendlandt gave the presentation of $Y_{\hbar,\ve}(\widehat{\mathfrak{sl}}(n))$ whose number of generators is finite.
\begin{Proposition}\label{Prop32}
Suppose that $n\geq3$. The affine Yangian $Y_{\hbar,\ve}(\widehat{\mathfrak{sl}}(n))$ is isomorphic to the associative algebra generated by $X_{i,r}^{+}, X_{i,r}^{-}, H_{i,r}$ $(i \in \{0,1,\cdots, n-1\}, r = 0,1)$ subject to the following defining relations:
\begin{gather}
[H_{i,r}, H_{j,s}] = 0,\label{Eq2.1}\\
[X_{i,0}^{+}, X_{j,0}^{-}] = \delta_{i,j} H_{i, 0},\label{Eq2.2}\\
[X_{i,1}^{+}, X_{j,0}^{-}] = \delta_{i,j} H_{i, 1} = [X_{i,0}^{+}, X_{j,1}^{-}],\label{Eq2.3}\\
[H_{i,0}, X_{j,r}^{\pm}] = \pm a_{i,j} X_{j,r}^{\pm},\label{Eq2.4}\\
[\tilde{H}_{i,1}, X_{j,0}^{\pm}] = \pm a_{i,j}X_{j,1}^{\pm},\text{ if }(i,j)\neq(0,n-1),(n-1,0),\label{Eq2.5}\\
[\tilde{H}_{0,1}, X_{n-1,0}^{\pm}] = \mp \left(X_{n-1,1}^{\pm}+(\ve+\dfrac{n}{2}\hbar) X_{n-1, 0}^{\pm}\right),\label{Eq2.6}\\
[\tilde{H}_{n-1,1}, X_{0,0}^{\pm}] = \mp \left(X_{0,1}^{\pm}-(\ve+\dfrac{n}{2}\hbar) X_{0, 0}^{\pm}\right),\label{Eq2.7}\\
[X_{i, 1}^{\pm}, X_{j, 0}^{\pm}] - [X_{i, 0}^{\pm}, X_{j, 1}^{\pm}] = \pm a_{i,j}\dfrac{\hbar}{2} \{X_{i, 0}^{\pm}, X_{j, 0}^{\pm}\}\text{ if }(i,j)\neq(0,n-1),(n-1,0),\label{Eq2.8}\\
[X_{0, 1}^{\pm}, X_{n-1, 0}^{\pm}] - [X_{0, 0}^{\pm}, X_{n-1, 1}^{\pm}]= \mp\dfrac{\hbar}{2} \{X_{0, 0}^{\pm}, X_{n-1, 0}^{\pm}\} + (\ve+\dfrac{n}{2}\hbar) [X_{0, 0}^{\pm}, X_{n-1, 0}^{\pm}],\label{Eq2.9}\\
(\ad X_{i,0}^{\pm})^{1+|a_{i,j}|} (X_{j,0}^{\pm})= 0 \ \text{ if }i \neq j, \label{Eq2.10}
\end{gather}
where $\tilde{H}_{i,1}=H_{i,1}-\dfrac{\hbar}{2}H_{i,0}^2$.
\end{Proposition}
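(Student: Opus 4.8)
The plan is to prove that the two presentations define isomorphic algebras by producing mutually inverse homomorphisms. Write $\widehat{Y}$ for the associative algebra defined by the finite presentation \eqref{Eq2.1}--\eqref{Eq2.10}, in which $H_{i,1}$ is understood as the abbreviation $\tilde{H}_{i,1}+\frac{\hbar}{2}H_{i,0}^2$, and write $Y$ for $Y_{\hbar,\ve}(\widehat{\mathfrak{sl}}(n))$ with its loop presentation \eqref{Eq1.1}--\eqref{Eq1.10}. First I would construct $\Phi\colon\widehat{Y}\to Y$ sending each generator to the element of the same name. Well-definedness amounts to checking that \eqref{Eq2.1}--\eqref{Eq2.10} hold in $Y$: \eqref{Eq2.1}--\eqref{Eq2.4}, \eqref{Eq2.8}--\eqref{Eq2.9} are instances of \eqref{Eq1.1}--\eqref{Eq1.4}, \eqref{Eq1.8}--\eqref{Eq1.9} at $r=s=0$; \eqref{Eq2.10} is \eqref{Eq1.10} with all degrees zero; and \eqref{Eq2.5}--\eqref{Eq2.7} follow from \eqref{Eq1.5}--\eqref{Eq1.7} at $r=s=0$ together with \eqref{Eq1.4} at $r=1$ and the identity $[H_{i,0}^2,X^{\pm}_{j,0}]=\pm a_{i,j}\{H_{i,0},X^{\pm}_{j,0}\}$ --- this last computation is precisely what the shift $\tilde{H}_{i,1}=H_{i,1}-\frac{\hbar}{2}H_{i,0}^2$ is designed to absorb.

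For the inverse I would work inside $\widehat{Y}$ and manufacture all the higher loop generators. Using $a_{i,i}=2$, set recursively $X^{\pm}_{i,r+1}:=\pm\frac12[\tilde{H}_{i,1},X^{\pm}_{i,r}]$ for $r\geq 1$ (the case $r=0$ being forced by \eqref{Eq2.5} with $j=i$), then $H_{i,r}:=[X^{+}_{i,r},X^{-}_{i,0}]$ for $r\geq 2$, and define $\Psi\colon Y\to\widehat{Y}$ on generators accordingly. The substance of the argument is to verify that these elements satisfy \eqref{Eq1.1}--\eqref{Eq1.10} in $\widehat{Y}$. I would do this by induction on $r+s$, the base cases being \eqref{Eq2.1}--\eqref{Eq2.10}: at each stage one commutes $\tilde{H}_{i,1}$ (or $X^{\pm}_{i,1}$) into a relation already known at lower degree and uses the Jacobi identity together with previously established relations to push it up one degree. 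The intermediate claims to isolate are that the two expressions $[X^{+}_{i,r},X^{-}_{i,0}]$ and $[X^{+}_{i,r-1},X^{-}_{i,1}]$ for $H_{i,r}$ agree, that $[X^{+}_{i,r},X^{-}_{j,s}]=0$ for $i\neq j$, that $[H_{i,r},H_{j,s}]=0$ for all $r,s$, and that the mixed relations \eqref{Eq1.5}--\eqref{Eq1.9} hold at general $(r,s)$ --- at the affine pairs $(0,n-1)$ and $(n-1,0)$ one must carry the scalar $\ve+\frac{n}{2}\hbar$ through the induction exactly as it appears in \eqref{Eq2.6}--\eqref{Eq2.7}.

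Finally I would check $\Phi\circ\Psi=\id$ and $\Psi\circ\Phi=\id$. The second is immediate: $\Phi$ carries each generator of $\widehat{Y}$ to the like-named element of $Y$ and $\Psi$ sends it back. For the first one must know that in $Y$ the recursive formulas used to define $\Psi$ reproduce the genuine loop generators, i.e.\ $[\tilde{H}_{i,1},X^{\pm}_{i,r}]=\pm 2X^{\pm}_{i,r+1}$ and $H_{i,r}=[X^{+}_{i,r},X^{-}_{i,0}]$ hold in $Y$; the former results from the cancellation between the $\{H_{i,0},X^{\pm}_{i,r}\}$ term coming out of \eqref{Eq1.5} and the one coming from $\frac{\hbar}{2}[H_{i,0}^2,X^{\pm}_{i,r}]$, the latter is \eqref{Eq1.2}.

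I expect the main obstacle to be the inductive verification in the second paragraph, and inside it the Serre relations \eqref{Eq1.10} at arbitrary degrees: extracting these from their degree-zero instances \eqref{Eq2.10} is the classical difficulty in passing from a Chevalley-type to a loop-type presentation (the finite-type case going back to Levendorskii and its affine counterpart to Guay--Nakajima--Wendlandt \cite{GNW}), and it demands careful bookkeeping of repeated commutators of $X^{\pm}_{i,1}$ against the lower-degree relations already established. The extra care required at the affine node owing to the $\ve+\frac{n}{2}\hbar$ shift is a secondary but genuine complication, handled by the same bootstrap with the scalar tracked at every step.
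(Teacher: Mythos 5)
Your overall architecture is sound, and it is worth noting that the paper itself offers no proof of this proposition: it is quoted from Guay--Nakajima--Wendlandt \cite{GNW}, whose argument is precisely the Levendorskii-type bootstrap you describe (specialize the loop relations to obtain the finite list; conversely, rebuild $X^{\pm}_{i,r}$ and $H_{i,r}$ inside the finitely presented algebra by iterating $\ad\tilde{H}_{i,1}$ and taking $[X^{+}_{i,r},X^{-}_{i,0}]$, then prove the loop relations by induction). Your easy direction is correct: the computation showing that $\tilde{H}_{i,1}=H_{i,1}-\frac{\hbar}{2}H_{i,0}^{2}$ absorbs the anticommutator terms, so that \eqref{Eq2.5}--\eqref{Eq2.7} hold in $Y_{\hbar,\ve}(\widehat{\mathfrak{sl}}(n))$, checks out, as does the consistency of the recursion $[\tilde{H}_{i,1},X^{\pm}_{i,r}]=\pm2X^{\pm}_{i,r+1}$ in the loop presentation and the verification that $\Phi\circ\Psi$ and $\Psi\circ\Phi$ are the identity once both maps exist.

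As a proof, however, there is a genuine gap: the hard direction is announced rather than carried out. The entire content of the proposition is that \eqref{Eq2.1}--\eqref{Eq2.10} already force the infinite families \eqref{Eq1.1}--\eqref{Eq1.10} for the recursively defined elements --- in particular $[H_{i,r},H_{j,s}]=0$ for all $r,s$, the well-definedness of $H_{i,r}$ (agreement of $[X^{+}_{i,r},X^{-}_{i,0}]$ with $[X^{+}_{i,r-1},X^{-}_{i,1}]$ and vanishing of $[X^{+}_{i,r},X^{-}_{j,s}]$ for $i\neq j$), the general-degree relations \eqref{Eq1.5}--\eqref{Eq1.9} with the scalar $\ve+\frac{n}{2}\hbar$ at the pairs $(0,n-1)$ and $(n-1,0)$, and above all the higher-degree Serre relations \eqref{Eq1.10}. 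Your text reduces all of this to ``induction with careful bookkeeping'' without exhibiting the induction scheme, the order in which the intermediate identities must be established, or any of the actual commutator manipulations; yet this verification is the bulk of the relevant section of \cite{GNW} (and of Levendorskii's finite-type argument), and the affine node genuinely complicates it because \eqref{Eq2.6}--\eqref{Eq2.7} feed the shift $\ve+\frac{n}{2}\hbar$ into every inductive step. So either execute that induction in detail or do what the paper does and cite \cite{GNW}; as written, the proposal is a correct plan rather than a proof.
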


By the definition of the affine Yangian $Y_{\hbar,\ve}(\widehat{\mathfrak{sl}}(n))$, we find that there exists a natural homomorphism from the universal enveloping algebra of $\widehat{\mathfrak{sl}}(n)$ to $Y_{\hbar,\ve}(\widehat{\mathfrak{sl}}(n))$. In order to simplify the notation, we denote the image of $x\in U(\widehat{\mathfrak{sl}}(n))$ via this homomorphism by $x$. We also denote by $E_{i,j}$ a matrix unit whose $(i,j)$ componet is 1 and other components are zero. 

We introduce a grading on $Y_{\hbar,\ve}(\widehat{\mathfrak{sl}}(n))$ by
\begin{equation*}
\text{deg}(H_{i,r})=0,\ \text{deg}(X^\pm_{i,r})=\begin{cases}
\pm1&\text{ if }i=0,\\
0&\text{ if }i\neq0.
\end{cases}
\end{equation*}
We denote the standard degreewise completion of $Y_{\hbar,\ve}(\widehat{\mathfrak{sl}}(n))$ by $\widetilde{Y}_{\hbar,\ve}(\widehat{\mathfrak{sl}}(n))$. 

\begin{Theorem}[Theorem~3.1 in \cite{U8}]\label{Main}
There exists an algebra homomorphism
\begin{equation*}
\Psi^{n,n+1}\colon Y_{\hbar,\ve}(\widehat{\mathfrak{sl}}(n))\to \widetilde{Y}_{\hbar,\ve}(\widehat{\mathfrak{sl}}(n+1))
\end{equation*}
determined by
\begin{gather*}
\Psi^{n,n+1}(H_{i,0})=\begin{cases}
H_{0,0}+H_{n,0}&\text{ if }i=0,\\
H_{i,0}&\text{ if }i\neq 0,
\end{cases}\\
\Psi^{n,n+1}(X^+_{i,0})=\begin{cases}
E_{n,1}t&\text{ if }i=0,\\
E_{i,i+1}&\text{ if }i\neq 0,
\end{cases}\ 
\Psi^{n,n+1}(X^-_{i,0})=\begin{cases}
E_{1,n}t^{-1}&\text{ if }i=0,\\
E_{i+1,i}&\text{ if }i\neq 0,
\end{cases}
\end{gather*}
and
\begin{align*}
\Psi^{n,n+1}(H_{i,1})&= H_{i,1}-\hbar\displaystyle\sum_{s \geq 0} \limits E_{i,n+1}t^{-s-1} E_{n+1,i}t^{s+1}+\hbar\displaystyle\sum_{s \geq 0}\limits E_{i+1,n+1}t^{-s-1} E_{n+1,i+1}t^{s+1},\\
\Psi^{n,n+1}(X^+_{i,1})&=X^+_{i,1}-\hbar\displaystyle\sum_{s \geq 0}\limits E_{i,n+1}t^{-s-1} E_{n+1,i+1}t^{s+1},\\
\Psi^{n,n+1}(X^-_{i,1})&=X^-_{i,1}-\hbar\displaystyle\sum_{s \geq 0}\limits E_{i+1,n+1}t^{-s-1} E_{n+1,i}t^{s+1}
\end{align*}
for $i\neq 0$. 
\end{Theorem}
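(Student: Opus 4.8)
The plan is to invoke the minimalistic presentation of Proposition~\ref{Prop32} for the \emph{source} $Y_{\hbar,\ve}(\widehat{\mathfrak{sl}}(n))$: it suffices to specify the images of the generators $X^{\pm}_{i,r}$, $H_{i,r}$ ($0\le i\le n-1$, $r\in\{0,1\}$) in $\widetilde{Y}_{\hbar,\ve}(\widehat{\mathfrak{sl}}(n+1))$ and to verify that these images obey relations \eqref{Eq2.1}--\eqref{Eq2.10}. For $i\neq0$ the images are the ones written in the statement, and $\Psi^{n,n+1}(H_{0,0})=H_{0,0}+H_{n,0}$, $\Psi^{n,n+1}(X^{\pm}_{0,0})$ are as stated. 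The order-one data at the affine node of the source is then \emph{forced}: from \eqref{Eq2.5} with $(i,j)=(1,0)$ --- legitimate since $1\neq 0,n-1$ and $\tilde H_{1,1}$ involves only node $1$ --- one is compelled to set
\[
\Psi^{n,n+1}(X^{+}_{0,1})=-[\Psi^{n,n+1}(\tilde H_{1,1}),\Psi^{n,n+1}(X^{+}_{0,0})],\qquad
\Psi^{n,n+1}(X^{-}_{0,1})=[\Psi^{n,n+1}(\tilde H_{1,1}),\Psi^{n,n+1}(X^{-}_{0,0})],
\]
and $\Psi^{n,n+1}(H_{0,1})=[\Psi^{n,n+1}(X^{+}_{0,1}),\Psi^{n,n+1}(X^{-}_{0,0})]$, where $\Psi^{n,n+1}(\tilde H_{1,1})=\Psi^{n,n+1}(H_{1,1})-\tfrac{\hbar}{2}\Psi^{n,n+1}(H_{1,0})^{2}$. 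Before anything else one must note that every infinite series occurring here (such as $\sum_{s\ge0}E_{i,n+1}t^{-s-1}E_{n+1,i}t^{s+1}$) is a well-defined element of the standard degreewise completion: each summand is homogeneous of exactly the degree needed to match the left-hand side of the relation it belongs to, the partial sums converge in the degreewise topology, and $\ad(x)$ acts continuously, so all brackets below may be evaluated term by term.

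For those among \eqref{Eq2.1}--\eqref{Eq2.10} whose indices all lie in $\{1,\dots,n-1\}$ the images $E_{i,i+1}$, $E_{i+1,i}$, $H_{i,0}$ and $\Psi^{n,n+1}(X^{\pm}_{i,1})$, $\Psi^{n,n+1}(H_{i,1})$ differ from the generators of the copy of the finite $\mathfrak{sl}(n)$-Yangian sitting in the ``upper-left corner'' of $Y_{\hbar,\ve}(\widehat{\mathfrak{sl}}(n+1))$ only by the correction terms $\hbar\sum_{s\ge0}(\cdots)$. Each such relation thus splits into the corresponding corner relation, which holds by Proposition~\ref{Prop32} applied to $\widehat{\mathfrak{sl}}(n+1)$, together with a cancellation among commutators of the correction terms with the $E_{i,i+1}$, $E_{i+1,i}$ and with $X^{\pm}_{j,1}$. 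These cancellations are short: $\ad(E_{j,j+1})$ and $\ad(E_{j+1,j})$ act on the vectors $E_{i,n+1}$ (resp.\ $E_{n+1,i}$), $i=1,\dots,n$, as on the standard (resp.\ costandard) $\mathfrak{gl}(n)$-module, so after summing over $s$ one recovers exactly the correction terms prescribed for $\Psi^{n,n+1}(H_{i,1})$ and $\Psi^{n,n+1}(X^{\pm}_{i,1})$. The level-zero Serre relations \eqref{Eq2.10}, which Proposition~\ref{Prop32} imposes only at $r=0$, are immediate from the $\mathfrak{sl}(n+1)$ commutation relations (using $n\ge3$).

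The substantial part consists of the relations in which the source's node $0$ meets its node $n-1$: \eqref{Eq2.6}, \eqref{Eq2.7}, the twisted Serre relation \eqref{Eq2.9}, and \eqref{Eq2.5} at $(i,j)=(0,1)$ and $(0,n-1)$, together with the remaining ``mixed'' instances of \eqref{Eq2.4}, \eqref{Eq2.8} and the second equality in \eqref{Eq2.3} at $(0,0)$. Here the geometry of the embedding is what one has to exploit: $\Psi^{n,n+1}(X^{+}_{0,0})=E_{n,1}t=[X^{+}_{n,0},X^{+}_{0,0}]$ inside $Y_{\hbar,\ve}(\widehat{\mathfrak{sl}}(n+1))$, so the affine node of the source is sent into a combination of the node-$0$ and node-$n$ data of the target --- a ``rotated'' copy of $\widehat{\mathfrak{sl}}(n)$ that wraps around $n$ rather than around $n+1$. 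I would first write $\Psi^{n,n+1}(X^{\pm}_{0,1})$ and $\Psi^{n,n+1}(H_{0,1})$ out explicitly from the forced brackets above, expecting $\Psi^{n,n+1}(X^{+}_{0,1})$ to be the natural order-one element built from $E_{n,1}t$ in $\widetilde Y_{\hbar,\ve}(\widehat{\mathfrak{sl}}(n+1))$ plus infinite correction terms involving the matrix units $E_{n,n+1}$, $E_{n+1,1}$ and powers of $t$, and then evaluate $[\Psi^{n,n+1}(\tilde H_{0,1}),E_{n-1,n}]$, $[\Psi^{n,n+1}(\tilde H_{0,1}),E_{n,n-1}]$ and the double brackets entering \eqref{Eq2.9}, using the twisted target relations \eqref{Eq1.6}, \eqref{Eq1.7}, \eqref{Eq1.9}. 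The hard part will be exactly this computation: one must show that the $\tfrac{\hbar}{2}$-``anomaly'' created by passing from rank $n$ to rank $n+1$ --- the right-hand sides of \eqref{Eq2.6}--\eqref{Eq2.7} carry $\ve+\tfrac{n}{2}\hbar$ whereas the ambient Yangian carries $\ve+\tfrac{n+1}{2}\hbar$ --- is cancelled precisely by the contributions of the infinite correction sums and of the central element $c$ (which enters through $[E_{n,1}t,E_{1,n}t^{-1}]=E_{n,n}-E_{1,1}+c$), so that the output reassembles into $\ve+\tfrac{n}{2}\hbar$ on the nose. With \eqref{Eq2.6}--\eqref{Eq2.7} and \eqref{Eq2.9} in hand, the remaining mixed cases of \eqref{Eq2.4}, \eqref{Eq2.5}, \eqref{Eq2.8} and the second equality in \eqref{Eq2.3} are of the same nature but lighter, and this completes the construction of $\Psi^{n,n+1}$.
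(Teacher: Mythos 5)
You should first note that this paper does not actually prove Theorem~\ref{Main}: it is quoted verbatim from Theorem~3.1 of \cite{U8}, so the only proof to compare with is the one in \cite{U8}, which — like your sketch — proceeds by fixing the images of the order-zero and order-one generators and verifying the Guay--Nakajima--Wendlandt relations \eqref{Eq2.1}--\eqref{Eq2.10} inside the degreewise completion. Your outline is the same strategy, and its skeleton is sound: forcing $\Psi^{n,n+1}(X^{\pm}_{0,1})$ via \eqref{Eq2.5} at $(i,j)=(1,0)$ and $\Psi^{n,n+1}(H_{0,1})$ via \eqref{Eq2.3} is legitimate (the signs you wrote are consistent with $a_{1,0}=-1$), the convergence remarks are correct, the identity $\Psi^{n,n+1}(X^+_{0,0})=E_{n,1}t=[X^+_{n,0},X^+_{0,0}]$ is correct, and the level-zero Serre relations are indeed purely Lie-algebraic.

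The genuine gap is that the substance of the theorem is exactly the verification you defer. For the twisted relations \eqref{Eq2.6}, \eqref{Eq2.7}, \eqref{Eq2.9} and the mixed instances of \eqref{Eq2.3}--\eqref{Eq2.5}, \eqref{Eq2.8} you only record the \emph{expectation} that the mismatch between the target's parameter $\ve+\tfrac{n+1}{2}\hbar$ (relations \eqref{Eq1.6}--\eqref{Eq1.9} with $n$ replaced by $n+1$) and the required $\ve+\tfrac{n}{2}\hbar$ is cancelled by the infinite correction sums and the central term; no computation is given, and nothing in the sketch certifies this cancellation, which is precisely what is proved in \cite{U8}. Moreover, even the cases you call ``short'' are understated: the correction sums do commute term by term with the degree-zero elements $E_{j,j\pm1}$ by Lie-algebra calculus, but relations such as $[\Psi^{n,n+1}(H_{i,1}),\Psi^{n,n+1}(H_{j,1})]=0$ and the cross terms of the form $[X^{\pm}_{j,1},\,E_{i,n+1}t^{-s-1}E_{n+1,i}t^{s+1}]$ are governed by the affine Yangian relations \eqref{Eq1.5}--\eqref{Eq1.9}, not by the $\mathfrak{gl}(n)$-module structure alone; they generate $\hbar$-corrections and shifts in the $t$-degree that must be tracked and shown to cancel after resummation in the completion. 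As written, therefore, this is a correct plan matching the cited proof in outline, but with the decisive computations — the actual content of Theorem~\ref{Main} — missing.
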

Let $m$ be an integer greater than $n$. By combining the homomorphisms $\Psi^{n,n+1},\cdots,\Psi^{m-1,m}$, we obtain a homomorphism
\begin{equation*}
\Psi^{n,m}=\Psi^{m-1,m}\circ\cdots\circ\Psi^{n,n+1}\colon Y_{\hbar,\ve}(\widehat{\mathfrak{sl}}(n))\to \widetilde{Y}_{\hbar,\ve}(\widehat{\mathfrak{sl}}(m))
\end{equation*}
determined by
\begin{gather*}
\Psi^{n,m}(H_{i,0})=\begin{cases}
H_{0,0}+\sum_{u=n}^{m-1}\limits H_{u,0}&\text{ if }i=0,\\
H_{i,0}&\text{ if }i\neq 0,
\end{cases}\\
\Psi^{n,m}(X^+_{i,0})=\begin{cases}
E_{n,1}t&\text{ if }i=0,\\
E_{i,i+1}&\text{ if }i\neq 0,
\end{cases}\ 
\Psi^{n,m}(X^-_{i,0})=\begin{cases}
E_{1,n}t^{-1}&\text{ if }i=0,\\
E_{i+1,i}&\text{ if }i\neq 0,
\end{cases}
\end{gather*}
and
\begin{align*}
\Psi^{n,m}(H_{i,1})&= H_{i,1}-\hbar\displaystyle\sum_{s \geq 0} \limits\sum_{u=n+1}^m\limits E_{i,u}t^{-s-1} E_{u,i}t^{s+1}+\hbar\displaystyle\sum_{s \geq 0}\limits\sum_{u=n+1}^m\limits E_{i+1,u}t^{-s-1} E_{u,i+1}t^{s+1},\\
\Psi^{n,m}(X^+_{i,1})&=X^+_{i,1}-\hbar\displaystyle\sum_{s \geq 0}\limits\sum_{u=n+1}^m\limits E_{i,u}t^{-s-1} E_{u,i+1}t^{s+1},\\
\Psi^{n,m}(X^-_{i,1})&=X^-_{i,1}-\hbar\displaystyle\sum_{s \geq 0}\limits\sum_{u=n+1}^m\limits E_{i+1,u}t^{-s-1} E_{u,i}t^{s+1}
\end{align*}
for $i\neq 0$. 
\section{$W$-algebras of type $A$}
We fix some notations for vertex algebras. For a vertex algebra $V$, we denote the generating field associated with $x\in V$ by $x(z)=\displaystyle\sum_{n\in\mathbb{Z}}\limits x_{(n)}z^{-n-1}$. We also denote the OPE of $V$ by
\begin{equation*}
x(z)y(w)\sim\displaystyle\sum_{s\geq0}\limits \dfrac{(x_{(s)}y)(w)}{(z-w)^{s+1}}
\end{equation*}
for all $x,y\in V$. We denote the vacuum vector (resp.\ the translation operator) by $|0\rangle$ (resp.\ $\partial$).

We set
\begin{equation*}
N=\displaystyle\sum_{i=1}^s\limits l_iq_i,\qquad q_1>q_2>\cdots>q_s>q_{s+1}=0,\ l_i\in\mathbb{Z}_{>0}.
\end{equation*}
We set a basis of $\mathfrak{gl}(N)$ as $\mathfrak{gl}(N)=\displaystyle\bigoplus_{1\leq i,j\leq N}\limits\mathbb{C}e_{i,j}$. 
For $k\in\mathbb{C}$, we take a symmetric bilinear form on $\mathfrak{gl}(N)$ determined by
\begin{equation*}
(e_{i,j}|e_{p,q})=k\delta_{i,q}\delta_{p,j}+\delta_{i,j}\delta_{p,q}.
\end{equation*}
We set two mappings
\begin{equation*}
\col:\{1,2,\cdots,N\}\to\{1,2,\cdots,s\},\ \row:\{1,2,\cdots,N\}\to\{1,2,\cdots,q_1\}
\end{equation*}
by
\begin{gather*}
i=\sum_{u=1}^v\limits l_uq_u+(\col(i)-\sum_{u=1}^vl_u-1)q_{v+1}+(\row(i)-q_1+q_{v+1}),\ 1\leq\row(i)\leq q_{v+1}
\end{gather*}
in the case that $\sum_{u=1}^v\limits l_uq_u<i\leq\sum_{u=1}^{v+1}\limits l_uq_u$.
We also set two mappings
\begin{gather*}
\hat{\ }\colon\{1,2,\cdots,N\}\setminus\{\sum_{u=1}^v\limits l_uq_u-w+1\mid 1\leq v\leq l,1\leq w\leq q_{v+1}\}\to\{1,2,\cdots,N\}\ i\mapsto\hat{i},\\
\tilde{\ }\colon\{q_1+1,q_1+2,\cdots,N\}\to\{1,2,\cdots,N\}\ j\mapsto\tilde{j}
\end{gather*}
by
\begin{gather*}
\col(\hat{i})=\col(i)+1,\row(\hat{i})=\row(i),
\col(\tilde{j})=\col(j)-1,\row(\tilde{j})=\row(j).
\end{gather*}
For all $1\leq i,j\leq N$, we take $1\leq \hat{i},\tilde{j}\leq N$ as

For example, in the case that $q_1=2,q_2=1,l_1=2,l_2=1$, we obtain
\begin{gather*}
\col(3)=2,\ \row(3)=1,\ \widehat{3}\text{ does not exist},\ \widetilde{3}=1.
\end{gather*}

We take a nilpotent element $f_{q_1,q_2,\cdots,q_s}^{l_1,l_2\cdots,l_s}\in\mathfrak{gl}(N)$ as 
\begin{equation*}
f_{q_1,q_2,\cdots,q_s}^{l_1,l_2\cdots,l_s}=\sum_{1\leq j\leq N}\limits e_{\hat{j},j}.
\end{equation*}
We note that $f_{q_1,q_2,\cdots,q_s}^{l_1,l_2\cdots,l_s}$ is of type $(l_1^{q_1-q_2},(l_1+l_2)^{q_2-q_3},\cdots,(\sum_{v=1}^{s-1}l_v)^{q_{s-1}-q_{s}},(\sum_{v=1}^{s}l_v)^{q_s})$.
We take an $\mathfrak{sl}(2)$-triple $(x,e,f_{q_1,q_2,\cdots,q_u}^{l_1,l_2\cdots,l_u})$, that is,
\begin{equation*}
x=[e,f_{q_1,q_2,\cdots,q_s}^{l_1,l_2\cdots,l_s}],\ [x,e]=e,\ [x,f_{q_1,q_2,\cdots,q_s}^{l_1,l_2\cdots,l_s}]=-f_{q_1,q_2,\cdots,q_s}^{l_1,l_2\cdots,l_s}
\end{equation*}
satisfying that
\begin{equation*}
\{y\in\mathfrak{gl}(N)|[x,y]=ay\}=\bigoplus_{\col(j)-\col(i)=a}\limits\mathbb{C} e_{i,j}.
\end{equation*}
We can set the grading of $\mathfrak{gl}(N)$ by $\text{deg}(e_{i,j})=\col(j)-\col(i)$ (see Section 7 in \cite{BK}). We will denote $\bigoplus_{\col(j)-\col(i)=u}\limits \mathbb{C}e_{i,j}$ by $\mathfrak{g}_u$.

We consider two vertex algebras. The first one is the universal affine vertex algebra associated with a Lie subalgebra
\begin{align*}
\mathfrak{b}&=\bigoplus_{u\leq0}\limits\mathfrak{g}_u=\bigoplus_{\substack{1\leq i,j\leq N\\\col(i)\geq\col(j)}}\limits \mathbb{C}e_{i,j}\subset\mathfrak{gl}(N)
\end{align*}
and its inner product
\begin{equation*}
\kappa(e_{i,j},e_{p,q})=\zeta_{\col(i)}\delta_{i,q}\delta_{p,j}+\delta_{i,j}\delta_{p,q},
\end{equation*}
where $\zeta_g=k+N-q_u$ if $\sum_{v=1}^{u-1}l_v<g\leq\sum_{v=1}^{u}l_v$.

The second one is the universal affine vertex algebra associated with a Lie superalgebra $\mathfrak{a}=\mathfrak{b}\oplus\displaystyle\bigoplus_{\substack{1\leq i,j\leq N\\\col(i)>\col(j)}}\limits\mathbb{C}\psi_{i,j}$  with the following commutator relations;
\begin{align*}
[e_{i,j},\psi_{p,q}]&=\delta_{j,p}\psi_{i,q}-\delta_{i,q}\psi_{p,j},\\
[\psi_{i,j},\psi_{p,q}]&=0,
\end{align*}
where $e_{i,j}$ is an even element and $\psi_{i,j}$ is an odd element.
We set the inner product on $\mathfrak{a}$ such that
\begin{gather*}
\widetilde{\kappa}(e_{i,j},e_{p,q})=\kappa(e_{i,j},e_{p,q}),\qquad\widetilde{\kappa}(e_{i,j},\psi_{p,q})=\widetilde{\kappa}(\psi_{i,j},\psi_{p,q})=0.
\end{gather*}
By the definition of $V^{\widetilde{\kappa}}(\mathfrak{a})$ and $V^\kappa(\mathfrak{b})$, $V^{\widetilde{\kappa}}(\mathfrak{a})$ contains $V^\kappa(\mathfrak{b})$.

By the Poincare-Birkhoff-Witt theorem, we can identify $V^{\widetilde{\kappa}}(\mathfrak{a})$ (resp. $V^\kappa(\mathfrak{b})$) with $U(\mathfrak{a}t^{-1}[t^{-1}])$ (resp. $U(\mathfrak{b}t^{-1}[t^{-1}])$). In order to simplify the notation, we denote the generating field $(xt^{-1})(z)$ as $x(z)$. By the definition of $V^{\widetilde{\kappa}}(\mathfrak{a})$, generating fields $u(z)$ and $v(z)$ satisfy the OPE
\begin{gather}
x(z)y(w)\sim\dfrac{[x,y](w)}{z-w}+\dfrac{\widetilde{\kappa}(x,y)}{(z-w)^2}\label{OPE1}
\end{gather}
for all $x,y\in\mathfrak{a}$. 

For all $x\in \mathfrak{a}$, let $x[-\omega]$ be $xt^{-\omega}\in U(\mathfrak{a}t^{-1}[t^{-1}])$. In this section, we regard $V^{\widetilde{\kappa}}(\mathfrak{a})$ (resp.\ $V^\kappa(\mathfrak{b})$) as a non-associative superalgebra whose product $\cdot$ is defined by
\begin{equation*}
x[-\omega]\cdot y[-\theta]=(x[-\omega])_{(-1)}y[-\theta].
\end{equation*}
We sometimes omit $\cdot$ in order to simplify the notation. By Kac-Wakimoto \cite{KW1} and \cite{KW2}, a $W$-algebra $\mathcal{W}^k(\mathfrak{gl}(N),f_{q_1,q_2,\cdots,q_s}^{l_1,l_2\cdots,l_s})$ can be realized as a subalgebra of $V^\kappa(\mathfrak{b})$.

Let us define an odd differential $d_0 \colon V^{\kappa}(\mathfrak{b})\to V^{\widetilde{\kappa}}(\mathfrak{a})$ determined by
\begin{gather}
d_01=0,\\
[d_0,\partial]=0,\label{ee5800}
\end{gather}
\begin{align}
[d_0,e_{i,j}[-1]]
&=\sum_{\substack{\col(i)>\col(r)\geq\col(j)}}\limits e_{r,j}[-1]\psi_{i,r}[-1]-\sum_{\substack{\col(j)<\col(r)\leq\col(i)}}\limits \psi_{r,j}[-1]e_{i,r}[-1]\nonumber\\
&\quad+\delta(\col(i)>\col(j))\zeta_{\col(i)}\psi_{i,j}[-2]+\psi_{\hat{i},j}[-1]-\psi_{i,\tilde{j}}[-1].\label{ee1}
\end{align}
By using Theorem 2.4 in \cite{KRW}, we can define the $W$-algebra $\mathcal{W}^k(\mathfrak{gl}(N),f_{q_1,q_2,\cdots,q_s}^{l_1,l_2\cdots,l_s})$ as follows.
\begin{Definition}\label{T125}
The $W$-algebra $\mathcal{W}^k(\mathfrak{gl}(N),f_{q_1,q_2,\cdots,q_s}^{l_1,l_2\cdots,l_s})$ is the vertex subalgebra of $V^\kappa(\mathfrak{b})$ defined by
\begin{equation*}
\mathcal{W}^k(\mathfrak{gl}(N),f_{q_1,q_2,\cdots,q_s}^{l_1,l_2\cdots,l_s})=\{y\in V^\kappa(\mathfrak{b})\subset V^{\widetilde{\kappa}}(\mathfrak{a})\mid d_0(y)=0\}.
\end{equation*}
\end{Definition}
We will give some kinds of elements of $\mathcal{W}^k(\mathfrak{gl}(N),f_{q_1,q_2,\cdots,q_s}^{l_1,l_2\cdots,l_s})$.
\begin{Theorem}\label{Generators}
We suppose that $p,q$ are integers satisfying that $q_1-q_{a}\leq q<q_1-q_{a+1}, q_1-q_{b}\leq p<q_1-q_{b+1}$ for some integers $1\leq b\leq a\leq s$. 
Let us set two kinds of elements of $V^\kappa(\mathfrak{b})$:
\begin{align*}
W^{(1)}_{p,q}&=\sum_{\substack{1\leq i,j\leq N,\\\row(i)=p,\row(j)=q,\\\col(i)=\col(j)}}e_{i,j}[-1],\\
W^{(2)}_{p,q}&=\sum_{\substack{\col(i)=\col(j)+1\\\row(i)=p,\row(j)=q}}e_{i,j}[-1]-\sum_{\substack{\col(i)=\col(j)\\\row(i)=p,\row(j)=q}}\gamma_{\col(i)}e_{i,j}[-2]\\
&\quad+\sum_{\substack{\col(u)=\col(j)<\col(i)=\col(v)\\\row(u)=\row(v)>q_1-q_a\\\row(i)=p,\row(j)=q}}\limits e_{u,j}[-1]e_{i,v}[-1]-\sum_{\substack{\col(u)=\col(j)\geq\col(i)=\col(v)\\\row(u)=\row(v)\leq q_1-q_a\\\row(i)=p,\row(j)=q}}\limits e_{u,j}[-1]e_{i,v}[-1],
\end{align*}
where $\gamma_c=\sum_{b=c+1}^{l_1+\cdots+l_s}\limits \zeta_{b}$.
Then, the $W$-algebra $\mathcal{W}^k(\mathfrak{gl}(N),f_{q_1,q_2,\cdots,q_s}^{l_1,l_2\cdots,l_s})$ contains $W^{(1)}_{p,q}$ and $W^{(2)}_{p,q}$.
\end{Theorem}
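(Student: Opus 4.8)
The plan is to verify directly that $W^{(1)}_{p,q}$ and $W^{(2)}_{p,q}$ lie in $\ker d_0$, which by Definition~\ref{T125} is precisely the statement that they belong to $\mathcal{W}^k(\mathfrak{gl}(N),f_{q_1,q_2,\cdots,q_s}^{l_1,l_2\cdots,l_s})$. The ingredients are exactly three: $d_0$ is an odd derivation for the product $\cdot$, so the Leibniz rule governs its action on the quadratic summands of $W^{(2)}_{p,q}$; $d_0$ commutes with $\partial$ by \eqref{ee5800}, so $d_0(e_{i,j}[-2])=\partial\,d_0(e_{i,j}[-1])$, which disposes of the $[-2]$-terms; and the explicit value of $d_0(e_{i,j}[-1])$ given in \eqref{ee1}. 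Since $W^{(1)}_{p,q}$ and $W^{(2)}_{p,q}$ are finite sums of matrix units, $d_0$ applied to them is again a finite sum, so no completion subtleties arise and the argument reduces to relabelling indices.

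First I would dispose of $W^{(1)}_{p,q}$. For $e_{i,j}[-1]$ with $\col(i)=\col(j)$, the two quadratic sums in \eqref{ee1} are indexed by the empty set (the column inequalities $\col(i)>\col(r)\geq\col(j)$ and $\col(j)<\col(r)\leq\col(i)$ become vacuous) and the $[-2]$-term carries $\delta(\col(i)>\col(j))=0$, so $d_0(e_{i,j}[-1])=\psi_{\hat{i},j}[-1]-\psi_{i,\tilde{j}}[-1]$. Summing over $\row(i)=p$, $\row(j)=q$, $\col(i)=\col(j)$ and reindexing $i\mapsto\hat{i}$ in the first sum, $j\mapsto\tilde{j}$ in the second, both become $\sum\psi_{a,b}[-1]$ over $\row(a)=p$, $\row(b)=q$, $\col(a)=\col(b)+1$, and they cancel; the only point to check is that the boundary terms for which $\hat{\ }$ or $\tilde{\ }$ is undefined match on the two sides, which follows from the ranges of $\hat{\ },\tilde{\ }$ together with the standing hypotheses $q_1-q_b\leq p<q_1-q_{b+1}$, $q_1-q_a\leq q<q_1-q_{a+1}$, $b\leq a$. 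Hence $d_0(W^{(1)}_{p,q})=0$.

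For $W^{(2)}_{p,q}$ I would write $W^{(2)}_{p,q}=A+B+C-D$, with $A$ the sum over $\col(i)=\col(j)+1$, $B=-\sum\gamma_{\col(i)}e_{i,j}[-2]$, and $C,D$ the two quadratic sums, and compute $d_0$ of each. By the previous paragraph $d_0(B)=-\sum\gamma_{\col(i)}\bigl(\psi_{\hat{i},j}[-2]-\psi_{i,\tilde{j}}[-2]\bigr)$; applying \eqref{ee1} to $A$ with $\col(i)=\col(j)+1$ produces a weight-two part $\sum\zeta_{\col(i)}\psi_{i,j}[-2]$, a part $\sum\bigl(\psi_{\hat{i},j}[-1]-\psi_{i,\tilde{j}}[-1]\bigr)$ with no matrix-unit factor, and the terms $\sum_{\col(r)=\col(j)}e_{r,j}[-1]\psi_{i,r}[-1]-\sum_{\col(r)=\col(i)}\psi_{r,j}[-1]e_{i,r}[-1]$; and since in $C$ and $D$ every matrix-unit factor lies in a diagonal column block, $d_0$ of each factor is a difference of two weight-one $\psi$-generators, so by the Leibniz rule $d_0(C)-d_0(D)$ contributes only terms that are linear in the $\psi$'s and carry a single matrix unit. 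Sorting $d_0(W^{(2)}_{p,q})$ by type: the weight-two $\psi$-terms (from $d_0(A)$ and $d_0(B)$ only) cancel after the reindexing $\hat{i}\mapsto i$, $\tilde{j}\mapsto j$ and the identity $\gamma_{c-1}-\gamma_c=\zeta_c$, which is immediate from $\gamma_c=\sum_{b=c+1}^{l_1+\cdots+l_s}\zeta_b$; the weight-one $\psi$-terms with no matrix-unit factor (from $d_0(A)$ only) cancel by the same reindexing as for $W^{(1)}_{p,q}$; and the terms with a single matrix unit come from $d_0(A)$, $d_0(C)$ and $d_0(D)$ and must cancel among themselves.

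The last cancellation is the main obstacle. The reason it works is the row dichotomy built into the definitions of $C$ and $D$: splitting the internal index $r$ in the $e\psi$-terms of $d_0(A)$ according to $\row(r)>q_1-q_a$ or $\row(r)\leq q_1-q_a$, and using the identities $v=\hat{u}$ and $\tilde{\hat{u}}=u$ to rewrite the $\psi$-factors, reproduces exactly the output of the Leibniz rule on $C$ and on $D$, with matching signs. Carrying this out requires a careful case analysis of the admissible ranges of $\col$ and $\row$ on every summation index, correct sign bookkeeping between the products $e\cdot\psi$ and $\psi\cdot e$, and separate treatment of the boundary columns where $\hat{\ }$ or $\tilde{\ }$ fails to exist, where the standing hypotheses on $p,q,a,b$ must be invoked to see that the would-be extra terms vanish. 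I expect no conceptual difficulty beyond this, but it is the step where essentially all the computation lies and the one most prone to indexing errors; organizing it cleanly — for instance by collecting first the $\psi_{a,b}$ with $\col(a)=\col(b)+2$, then those with $\col(a)=\col(b)+1$, then those attached to a matrix unit — is the real content of the proof.
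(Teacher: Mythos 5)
Your proposal follows essentially the same route as the paper's own proof: verify $d_0$-closedness directly, using \eqref{ee307}/\eqref{ee2} for the action of $d_0$ on the generators, the commutation $[d_0,\partial]=0$ for the $[-2]$-terms, the Leibniz rule on the quadratic sums, and then cancellation organized into exactly the three groups you list (the $\psi[-2]$-terms via $\gamma_{c-1}-\gamma_c=\zeta_c$, the bare $\psi[-1]$-terms by the same reindexing as for $W^{(1)}_{p,q}$, and the $e\psi$-terms against the Leibniz output of the two quadratic sums split by the row dichotomy). This is the paper's argument, so the plan is correct; the remaining index bookkeeping you defer is likewise carried out in the paper only as a ``direct computation.''
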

\begin{proof}
By \eqref{ee1}, if $\col(i)=\col(j)$, we obtain
\begin{align}
d_0(e_{i,j}[-1])
&=\psi_{\widehat{i},j}[-1]-\psi_{i,\widetilde{j}}[-1].\label{ee307}
\end{align}
Similarly, by \eqref{ee1}, if $\col(i)=\col(j)+1$, we also have
\begin{align}
&\quad d_0(e_{i,j}[-1])\nonumber\\
&=\sum_{\substack{\col(r)=\col(j)}}\limits e_{r,j}[-1]\psi_{i,r}[-1]-\sum_{\substack{\col(r)=\col(i)}}\limits \psi_{r,j}[-1]e_{i,r}[-1]\nonumber\\
&\quad+\zeta_{\col(i)}\psi_{i,j}[-2]+\psi_{\hat{i},j}[-1]-\psi_{i,\tilde{j}}[-1].\label{ee2}
\end{align}

It is enough to show that $d_0(W^{(r)}_{p,q})=0$. First, we show the case that $r=1$.
By \eqref{ee307}, we obtain
\begin{align}
d_0(W^{(1)}_{p,q})&=\sum_{\substack{1\leq i,j\leq N,\\\row(i)=p,\row(j)=q,\\\col(i)=\col(j)}}(\psi_{\widehat{i},j}[-1]-\psi_{i,\widetilde{j}}[-1])\nonumber\\
&=\sum_{\substack{1\leq i,j\leq N,\\\row(i)=p,\row(j)=q,\\\col(i)=\col(j)}}\psi_{\widehat{i},j}[-1]-\sum_{\substack{1\leq i,j\leq N,\\\row(i)=p,\row(j)=q,\\\col(i)=\col(j)}}\psi_{i,\widetilde{j}}[-1]\nonumber\\
&=0,\label{ee380}
\end{align}
where the last equality is due to the assumption that $1\leq b\leq a\leq l$.

Next, we show the case when $r=2$. 
By the definition of $W^{(2)}_{p,q}$, we can rewrite $d_0(W^{(2)}_{p,q})$ as
\begin{align}
&\sum_{\substack{\col(i)=\col(j)+1\\\row(i)=p,\row(j)=q}}d_0(e_{i,j}[-1])-\sum_{\substack{\col(i)=\col(j)\\\row(i)=p,\row(j)=q}}\gamma_{\col(i)}d_0(e_{i,j}[-2])\nonumber\\
&\quad+\sum_{\substack{\col(u)=\col(j)<\col(i)=\col(v)\\\row(u)=\row(v)>q_1- q_a\\\row(i)=p,\row(j)=q}}\limits d_0(e_{u,j}[-1])e_{i,v}[-1]\nonumber\\
&\quad+\sum_{\substack{\col(u)=\col(j)<\col(i)=\col(v)\\\row(u)=\row(v)>q_1- q_a\\\row(i)=p,\row(j)=q}}\limits e_{u,j}[-1]d_0(e_{i,v}[-1])\nonumber\\
&\quad-\sum_{\substack{\col(u)=\col(j)\geq\col(i)=\col(v)\\\row(u)=\row(v)\leq q_1-q_a\\\row(i)=p,\row(j)=q}}\limits d_0(e_{u,j}[-1])e_{i,v}[-1]\nonumber\\
&\quad-\sum_{\substack{\col(u)=\col(j)\geq\col(i)=\col(v)\\\row(u)=\row(v)\leq q_1-q_a\\\row(i)=p,\row(j)=q}}\limits e_{u,j}[-1]d_0(e_{i,v}[-1]).\label{ee3}
\end{align}
By \eqref{ee2}, we obtain
\begin{align}
&\quad\text{the first term of \eqref{ee3}}\nonumber\\
&=\sum_{\substack{\col(i)=\col(j)+1\\\row(i)=p,\row(j)=q}}\sum_{\substack{\col(r)=\col(j)}}\limits e_{r,j}[-1]\psi_{i,r}[-1]-\sum_{\substack{\col(i)=\col(j)+1\\\row(i)=p,\row(j)=q}}\sum_{\substack{\col(r)=\col(i)}}\limits \psi_{r,j}[-1]e_{i,r}[-1]\nonumber\\
&\quad+\sum_{\substack{\col(i)=\col(j)+1\\\row(i)=p,\row(j)=q}}\zeta_{\col(i)}\psi_{i,j}[-2]+\sum_{\substack{\col(i)=\col(j)+1\\\row(i)=p,\row(j)=q}}(\psi_{\hat{i},j}[-1]-\psi_{i,\tilde{j}}[-1]).\label{ee5}
\end{align}
Similarly to the proof of $d_0(W^{(1)}_{i,j})=0$, we find that the last term of the right hand side of \eqref{ee5} is equal to zero. Then, we have
\begin{align}
&\quad\text{the first term of \eqref{ee3}}\nonumber\\
&=\sum_{\substack{\col(i)=\col(j)+1\\\row(i)=p,\row(j)=q}}\sum_{\substack{\col(r)=\col(j)}}\limits e_{r,j}[-1]\psi_{i,r}[-1]-\sum_{\substack{\col(i)=\col(j)+1\\\row(i)=p,\row(j)=q}}\sum_{\substack{\col(r)=\col(i)}}\limits \psi_{r,j}[-1]e_{i,r}[-1]\nonumber\\
&\quad+\sum_{\substack{\col(i)=\col(j)+1\\\row(i)=p,\row(j)=q}}\zeta_{\col(i)}\psi_{i,j}[-2].\label{ee5.1}
\end{align}

By \eqref{ee307} and \eqref{ee5800}, we obtain
\begin{align}
&\quad\text{the second term of \eqref{ee3}}\nonumber\\
&=-\sum_{\substack{\col(i)=\col(j)\\\row(i)=p,\row(j)=q}}\gamma_{\col(i)}(\psi_{\widehat{i},j}[-2]-\psi_{i,\widetilde{j}}[-2])\nonumber\\
&=\sum_{\substack{\col(i)=\col(j)\\\row(i)=p,\row(j)=q}}(\gamma_{\col(\hat{i})}-\gamma_{\col(i)})\psi_{\widehat{i},j}[-2]\nonumber\\
&=-\sum_{\substack{\col(i)=\col(j)\\\row(i)=p,\row(j)=q}}\limits\zeta_{\col(\hat{i})}\psi_{\hat{i},j}[-2].\label{ee6}
\end{align}
By \eqref{ee307}, we obtain
\begin{align}
&\quad\text{the third term of the right hand side of \eqref{ee3}}\nonumber\\
&=\sum_{\substack{\col(u)=\col(j)<\col(i)=\col(v)\\\row(u)=\row(v)>q_1- q_a\\\row(i)=p,\row(j)=q}}\limits (\psi_{\hat{u},j}[-1]-\psi_{u,\tilde{j}}[-1])e_{i,v}[-1]\nonumber\\
&=\sum_{\substack{\col(u)+1=\col(j)+1=\col(i)=\col(v)\\\row(u)=\row(v)>q_1- q_a\\\row(i)=p,\row(j)=q}}\limits \psi_{\hat{u},j}[-1]e_{i,v}[-1]\label{ee7},\\
&\quad\text{the 4-th term of the right hand side of \eqref{ee3}}\nonumber\\
&=\sum_{\substack{\col(u)=\col(j)<\col(i)=\col(v)\\\row(u)=\row(v)>q_1- q_a\\\row(i)=p,\row(j)=q}}\limits e_{u,j}[-1](\psi_{\hat{i},v}[-1]-\psi_{i,\tilde{v}}[-1])\nonumber\\
&=-\sum_{\substack{\col(u)+1=\col(j)+1=\col(i)=\col(v)\\\row(u)=\row(v)>q_1- q_a\\\row(i)=p,\row(j)=q}}\limits e_{u,j}[-1]\psi_{i,\tilde{v}}[-1],\label{ee8}\\
&\quad\text{the 5-th term of the right hand side of \eqref{ee3}}\nonumber\\
&=-\sum_{\substack{\col(u)=\col(j)\geq\col(i)=\col(v)\\q_1-q_{\col(j)}<\row(u)=\row(v)\leq q_1-q_a\\\row(i)=p,\row(j)=q}}\limits (\psi_{\hat{u},j}[-1]-\psi_{u,\tilde{j}}[-1])e_{i,v}[-1]\nonumber\\
&=\sum_{\substack{\col(u)=\col(j)=\col(i)+1=\col(v)+1\\q_1-q_{\col(j)}<\row(u)=\row(v)\leq q_1-q_a\\\row(i)=p,\row(j)=q}}\limits\psi_{u,\tilde{j}}[-1]e_{i,v}[-1],\label{ee9}\\
&\quad\text{the 6-th term of the right hand side of \eqref{ee3}}\nonumber\\
&=-\sum_{\substack{\col(u)=\col(j)\geq\col(i)=\col(v)\\q_1-q_{\col(j)}<\row(u)=\row(v)\leq q_1-q_a\\\row(i)=p,\row(j)=q}}\limits e_{u,j}[-1](\psi_{\hat{i},v}[-1]-\psi_{i,\tilde{v}}[-1])\nonumber\\
&=-\sum_{\substack{\col(u)=\col(j)=\col(i)+1=\col(v)+1\\q_1-q_{\col(j)}<\row(u)=\row(v)\leq q_1-q_a\\\row(i)=p,\row(j)=q}}\limits e_{u,j}[-1]\psi_{\hat{i},v}[-1].\label{ee10}
\end{align}
Here after, in order to simplify the notation, let us denote the $i$-th term of (the number of the equation) by 
$\text{(the number of the equation)}_i$. By a direct computation, we obtain
\begin{gather*}
\eqref{ee5}_1+\eqref{ee8}+\eqref{ee10}=0,\\
\eqref{ee5}_2+\eqref{ee7}+\eqref{ee9}=0,\\
\eqref{ee5}_3+\eqref{ee6}=0.
\end{gather*}
Then, adding \eqref{ee5}-\eqref{ee10}, we obtain $d_0(W^{(2)}_{p,q})=0$.
\end{proof}
\begin{Remark}\label{ren}
The element $W^{(r)}_{i,j}$ for $1\leq i,j\leq q_s$ in \cite{U7} coincides with $W^{(r)}_{q_1-i,q_1-j}$ in Theorem~\ref{Generators}.
\end{Remark}
We write down some OPEs of $W^{(1)}_{i,j}$ and $W^{(1)}_{p,q}$.
\begin{Theorem}\label{Tho1}
We suppose that $p,q,i,j$ are integers satisfying that $q_1-q_{a_1}\leq q<q_1-q_{a_1+1},\ q_1-q_{b_1}\leq p<q_1-q_{b_1+1},\ q_1-q_{a_2}\leq j<q_1-q_{a_2+1},\ q_1-q_{b_2}\leq i<q_1-q_{b_2+1}$ for some integers $1\leq b_1\leq a_1\leq s$ and $1\leq b_2\leq a_2\leq s$. 

\textup{(1)}\ 
The following equation holds:
\begin{gather*}
(W^{(1)}_{i,j})_{(0)}W^{(1)}_{p,q}=\delta_{j,p}W^{(1)}_{i,q}-\delta_{i,q}W^{(1)}_{p,j}.
\end{gather*}
\textup{(2)}\ The following equation holds:
\begin{align*}
(W^{(1)}_{i,j})_{(0)}W^{(2)}_{p,q}
&=\delta_{j,p}W^{(2)}_{i,q}-\delta_{i,q}W^{(2)}_{p,j}+\delta(j>q_1-q_a,i\leq q_1-q_a)(W^{(1)}_{i,q})_{(-1)}W^{(1)}_{p,j}.
\end{align*}
\end{Theorem}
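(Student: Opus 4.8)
The plan is to compute both OPEs directly inside the larger vertex algebra $V^{\kappa}(\mathfrak b)\subset V^{\widetilde\kappa}(\mathfrak a)$ using only the elementary OPE \eqref{OPE1}, since $W^{(1)}_{i,j}$ and $W^{(2)}_{p,q}$ are explicit (normally ordered) polynomials in the generators $e_{a,b}[-1]$ and $e_{a,b}[-2]$. For part (1), $W^{(1)}_{i,j}=\sum e_{a,b}[-1]$ is linear in the generators, so $(W^{(1)}_{i,j})_{(0)}W^{(1)}_{p,q}$ is governed entirely by the single pole of \eqref{OPE1}, i.e. by the commutator $[e_{a,b},e_{c,d}]=\delta_{b,c}e_{a,d}-\delta_{d,a}e_{c,b}$ inside $\mathfrak{gl}(N)$. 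First I would substitute the defining sums, expand the double sum of commutators, and observe that the constraint $\col(a)=\col(b)$ on the index set of $W^{(1)}_{i,j}$ (and likewise for $W^{(1)}_{p,q}$) forces the surviving Kronecker deltas to collapse into $\delta_{j,p}$ and $\delta_{i,q}$; the remaining index sums then reassemble into $W^{(1)}_{i,q}$ and $W^{(1)}_{p,j}$ exactly. The $\kappa$-term (double pole) contributes nothing to the $(0)$-product. I would also note that $W^{(1)}_{i,j}$ is closely tied to the $\mathfrak{gl}$-action on $V^{\widetilde\kappa}(\mathfrak a)$, so one could alternatively invoke that $(W^{(1)}_{i,j})_{(0)}$ acts as the endomorphism induced by $\sum e_{a,b}$; but the direct index bookkeeping is cleanest and self-contained.

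For part (2) the computation is longer but structurally the same: $(W^{(1)}_{i,j})_{(0)}$ is a derivation of all products of the vertex algebra (being a zero-mode applied via $x_{(0)}(y_{(-1)}z)=(x_{(0)}y)_{(-1)}z+y_{(-1)}(x_{(0)}z)$ together with $[x_{(0)},\partial]=0$), so it suffices to apply it termwise to the four pieces of $W^{(2)}_{p,q}$: the linear $\col(i)=\col(j)+1$ term, the $e_{i,j}[-2]$ term, and the two quadratic normally ordered terms. For each linear term one gets, as in part (1), $\delta_{j,p}$ and $\delta_{i,q}$ contributions reshaping the sum back into the corresponding piece of $W^{(2)}_{i,q}$ minus that of $W^{(2)}_{p,j}$. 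For the quadratic terms, $(W^{(1)}_{i,j})_{(0)}$ hits each factor by the Leibniz rule, producing sums over $\row$-matched pairs that regroup into the quadratic pieces of $W^{(2)}_{i,q}$ and $W^{(2)}_{p,j}$. The only discrepancy between "$\delta_{j,p}W^{(2)}_{i,q}-\delta_{i,q}W^{(2)}_{p,j}$" and the honest sum of all these pieces is an anomalous term coming from the boundary of the $\row$-index ranges: the quadratic defining sums of $W^{(2)}$ are split at $\row=q_1-q_a$, and when $j$ and $i$ sit on opposite sides of that cutoff (i.e. $j>q_1-q_a$ but $i\le q_1-q_a$) the regrouping fails to telescope and leaves over exactly the normally ordered product $(W^{(1)}_{i,q})_{(-1)}W^{(1)}_{p,j}$. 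I would isolate this boundary mismatch carefully, tracking which summands in the "$\geq$" block versus the "$<$" block of $W^{(2)}$ are created or destroyed by the derivation.

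The main obstacle is this last point: keeping the index constraints ($\col$-equalities, the split at $q_1-q_a$, the ranges $q_1-q_{a_i}\le\,\cdot\,<q_1-q_{a_i+1}$) consistent while the derivation moves $\row$ and $\col$ indices around, and verifying that all contributions cancel except for the single correction term. Concretely I would organize it as in the proof of Theorem~\ref{Generators}: label the terms produced by the derivation acting on each of the four pieces of $W^{(2)}_{p,q}$, match them in groups against the corresponding terms of $W^{(2)}_{i,q}$ and $W^{(2)}_{p,j}$, and show each group sums to zero apart from the leftover that assembles into $(W^{(1)}_{i,q})_{(-1)}W^{(1)}_{p,j}$. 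A minor secondary check is that no double-pole ($\kappa$- or $\widetilde\kappa$-) contributions survive in the $(0)$-product — they don't, since the $(0)$-mode only sees the single pole of \eqref{OPE1} — and that the $e_{i,j}[-2]$ term of $W^{(2)}$ transforms correctly under the derivation using $[d_0,\partial]=0$-type reasoning, here replaced by $[(W^{(1)}_{i,j})_{(0)},\partial]=0$.
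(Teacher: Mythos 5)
Your proposal is correct and follows essentially the same route as the paper, which states that the result "can be given by a direct computation": the zero mode $(W^{(1)}_{i,j})_{(0)}$ only sees the simple pole of \eqref{OPE1}, acts as a derivation of all products and commutes with $\partial$, so applying it termwise to the pieces of $W^{(1)}_{p,q}$ and $W^{(2)}_{p,q}$ and regrouping indices yields the stated identities, with the correction term $(W^{(1)}_{i,q})_{(-1)}W^{(1)}_{p,j}$ arising exactly from the mismatch at the $\row$-cutoff $q_1-q_a$ in the quadratic part of $W^{(2)}_{p,q}$. Nothing further is needed beyond carrying out the index bookkeeping you describe.
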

The proof can be given by a direct computation.
\section{Affine Yangians and non-rectangular $W$-algebras}
Let us recall the definition of a universal enveloping algebra of a vertex algebra in the sense of \cite{FZ} and \cite{MNT}.
For any vertex algebra $V$, let $L(V)$ be the Borcherds Lie algebra, that is,
\begin{align}
 L(V)=V{\otimes}\mathbb{C}[t,t^{-1}]/\text{Im}(\partial\otimes\id +\id\otimes\frac{d}{d t})\label{844},
\end{align}
where the commutation relation is given by
\begin{align*}
 [ut^a,vt^b]=\sum_{r\geq 0}\begin{pmatrix} a\\r\end{pmatrix}(u_{(r)}v)t^{a+b-r}
\end{align*}
for all $u,v\in V$ and $a,b\in \mathbb{Z}$. Now, we define the universal enveloping algebra of $V$.
\begin{Definition}[Section~6 in \cite{MNT}]\label{Defi}
We define $\mathcal{U}(V)$ as the quotient algebra of the standard degreewise completion of the universal enveloping algebra of $L(V)$ by the completion of the two-sided ideal generated by
\begin{gather}
(u_{(a)}v)t^b-\sum_{i\geq 0}
\begin{pmatrix}
 a\\i
\end{pmatrix}
(-1)^i(ut^{a-i}vt^{b+i}-(-1)^avt^{a+b-i}ut^{i}),\label{241}\\
|0\rangle t^{-1}-1.
\end{gather}
We call $\mathcal{U}(V)$ the universal enveloping algebra of $V$.
\end{Definition}
The projection map from $\mathfrak{b}$ to $\bigotimes_{i=1}^l\limits \mathfrak{gl}(q_i)$ induces the injective homomorphism called the Miura map (see \cite{KW1}):
\begin{align*}
\mu\colon \mathcal{W}^k(\mathfrak{gl}(N),f)\to V^{\kappa}(\bigotimes_{i=1}^{s}\mathfrak{gl}(q_i)).
\end{align*}
Since $\mathcal{U}(V^\Omega(\mathfrak{gl}(n)))$ is the standard degreewise completion of the affinization of $\mathfrak{gl}(n)$ associated with the invariant bilinear form $\Omega$ on $\mathfrak{gl}(n)$, the Miura map induces an injective homomorphism
\begin{equation*}
\widetilde{\mu}\colon \mathcal{W}^k(\mathfrak{gl}(N),f)\to\widehat{\bigotimes}_{i=1}^{s}U(\widehat{\mathfrak{gl}}(q_i))^{\otimes l_i},
\end{equation*}
where $\widehat{\bigotimes}_{i=1}^{s}U(\widehat{\mathfrak{gl}}(q_i))^{\otimes l_i}$ is the standard degreewise completion of $\bigotimes_{i=1}^{s}U(\widehat{\mathfrak{gl}}(q_i))^{\otimes l_i}$

Here after, in order to simplify the notation, in the case that $q_1-q_a<i,j\leq q_1-q_{a+1}$, we sometimes denote $W^{(r)}_{i,j}$ by $\widetilde{W}^{(r),a}_{i-q_1+q_a,j-q_1+q_a}$.
In Theorem~6.1 of \cite{U7}, we constructed a homomorphism from the affine Yangian associated with $\widehat{\mathfrak{sl}}(q_s)$ to the universal enveloping algebra of 
$\mathcal{W}^{k}(\mathfrak{gl}(N),f_{q_1,q_2,\cdots,q_s}^{l_1,l_2\cdots,l_s})$. Let us set $\widehat{W}^{(1),s}_{i,j}=\widetilde{W}^{(1),s}_{q_s-i,q_s-j}$.
\begin{Theorem}[Theorem 6.1 in \cite{U7}]\label{Ue}
Suppose that $q_s\geq 3$ and a complex number $\ve_s$ satisfies $\dfrac{\ve_s+n\hbar}{\hbar}=k+N$.
Then, there exists an algebra homomorphism
\begin{equation*}
\Phi\colon Y_{\hbar,\ve_s}(\widehat{\mathfrak{sl}}(q_s))\to \mathcal{U}(\mathcal{W}^{k}(\mathfrak{gl}(N),f_{q_1,q_2,\cdots,q_s}^{l_1,l_2\cdots,l_s}))
\end{equation*}
determined by
\begin{gather*}
\Phi(X^+_{i,0})=\begin{cases}
\widehat{W}^{(1),s}_{q_s,1}t&\text{ if }i=0,\\
\widehat{W}^{(1),s}_{i,i+1}&\text{ if }i\neq0,
\end{cases}
\quad \Phi(X^-_{i,0})=\begin{cases}
\widehat{W}^{(1),s}_{q_s,1}t^{-1}&\text{ if }i=0,\\
\widehat{W}^{(1),s}_{i+1,i}&\text{ if }i\neq0,
\end{cases}
\end{gather*}
\begin{align*}
\Phi(H_{i,1})&=
-\hbar(\widehat{W}^{(2),s}_{i,i}t-\widehat{W}^{(2),s}_{i+1,i+1}t)-\dfrac{i}{2}\hbar\Phi(H_{i,0})+\hbar \widehat{W}^{(1),s}_{i,i}\widehat{W}^{(1),s}_{i+1,i+1}\\
&\quad+\hbar\displaystyle\sum_{y \geq 0}  \limits\displaystyle\sum_{u=1}^{i}\limits \widehat{W}^{(1),s}_{i,u}t^{-y}\widehat{W}^{(1),s}_{u,i}t^y\\
&\quad+\hbar\displaystyle\sum_{y \geq 0} \limits\displaystyle\sum_{u=i+1}^{q_s}\limits \widehat{W}^{(1),s}_{i,u}t^{-y-1} \widehat{W}^{(1),s}_{u,i}t^{y+1}\\
&\quad-\hbar\displaystyle\sum_{y \geq 0}\limits\displaystyle\sum_{u=1}^{i}\limits \widehat{W}^{(1),s}_{i+1,u}t^{-y} W^{(1)}_{u,i+1}t^y\\
&\quad-\hbar\displaystyle\sum_{y \geq 0}\limits\displaystyle\sum_{u=i+1}^{q_s} \limits \widehat{W}^{(1),s}_{i+1,u}t^{-y-1} \widehat{W}^{(1),s}_{u,i+1}t^{y+1},\\
\Phi^{q_1,\cdots,q_s}_s(X^+_{i,1})&=
-\hbar \widehat{W}^{(2),s}_{i,i+1}t-\dfrac{i}{2}\hbar\Phi(X_{i,0}^{+})+\hbar\displaystyle\sum_{y \geq 0}\limits\displaystyle\sum_{u=1}^i\limits W^{(1),s}_{i,u}t^{-y} \widehat{W}^{(1),s}_{u,i}t^y\\
&\quad+\hbar\displaystyle\sum_{y \geq 0}\limits\displaystyle\sum_{u=i+1}^{q_s}\limits \widehat{W}^{(1),s}_{i,u}t^{-y-1} \widehat{W}^{(1),s}_{u,i}t^{y+1},\\
\Phi^{q_1,\cdots,q_s}_s(X^-_{i,1})&=
-\hbar \widehat{W}^{(2),s}_{i+1,i}t-\dfrac{i}{2}\hbar\Phi(X_{i,0}^{-})+\hbar\displaystyle\sum_{y\geq 0}\limits\displaystyle\sum_{u=1}^i\limits\widehat{W}^{(1),s}_{i+1,u}t^{-y} \widehat{W}^{(1),s}_{u,i}t^y\\
&\quad+\hbar\displaystyle\sum_{y\geq 0}\limits\displaystyle\sum_{u=i+1}^{q_s}\limits \widehat{W}^{(1),s}_{i+1,u}t^{-y-1} \widehat{W}^{(1),s}_{u,i}t^{y+1}.
\end{align*}
\end{Theorem}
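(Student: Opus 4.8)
The plan is to reduce to the finite presentation of Proposition~\ref{Prop32} and then verify the resulting relations by transporting the computation through the Miura map. First, by \eqref{Eq2.2} the image $\Phi(H_{i,0})$ is forced to be $[\Phi(X^+_{i,0}),\Phi(X^-_{i,0})]$; using Theorem~\ref{Tho1}\,(1) this bracket is the expected Cartan element $\widehat{W}^{(1),s}_{i,i}-\widehat{W}^{(1),s}_{i+1,i+1}$ (for $i=0$, together with the central contribution coming from the second-order pole of the $W^{(1)}\times W^{(1)}$ OPE). Since each $\widehat{W}^{(1),s}_{i,j}$ and $\widehat{W}^{(2),s}_{i,j}$ lies in $\mathcal{W}^{k}(\mathfrak{gl}(N),f_{q_1,q_2,\cdots,q_s}^{l_1,l_2\cdots,l_s})$ by Theorem~\ref{Generators}, and $\mathcal{U}(\mathcal{W}^{k}(\mathfrak{gl}(N),f_{q_1,q_2,\cdots,q_s}^{l_1,l_2\cdots,l_s}))$ is closed under the (completed) products occurring in the formulas, the assignment is well defined. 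Moreover the Miura map $\widetilde{\mu}$ is injective, so it suffices to verify relations \eqref{Eq2.1}--\eqref{Eq2.10} for the $\widetilde{\mu}$-images, i.e.\ inside $\widehat{\bigotimes}_{i=1}^{s}U(\widehat{\mathfrak{gl}}(q_i))^{\otimes l_i}$, where every $\widehat{W}^{(r),s}_{i,j}$ becomes an explicit finite sum of currents and all contractions are finite in each homogeneous degree.

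The relations \eqref{Eq2.1}, \eqref{Eq2.2}, the $r=0$ case of \eqref{Eq2.4}, and the Serre relations \eqref{Eq2.10} are then immediate from Theorem~\ref{Tho1}\,(1): the modes $\{\widehat{W}^{(1),s}_{i,j}t^m\}$ span the image of a homomorphism from $U(\widehat{\mathfrak{gl}}(q_s))$, and the listed relations are exactly the Chevalley--Serre relations of $\widehat{\mathfrak{sl}}(q_s)$ read off there; the $r=1$ case of \eqref{Eq2.4} holds because the level-one generators are eigenvectors for the $\ad$-action of the Cartan currents with the correct weights. The substance is the set \eqref{Eq2.3}, \eqref{Eq2.5}, \eqref{Eq2.6}, \eqref{Eq2.7}, \eqref{Eq2.8}, \eqref{Eq2.9}. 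For \eqref{Eq2.3} one expands $[\Phi(X^+_{i,1}),\Phi(X^-_{j,0})]$ and its mirror in the current realization: off-diagonal bilinears cancel in pairs, the diagonal ones reassemble the two $y$-sums in $\Phi(H_{i,1})$, and the linear shift $-\tfrac{i}{2}\hbar\,\Phi(H_{i,0})$ carried by $\Phi(X^\pm_{i,1})$ matches the one in $\Phi(H_{i,1})$. For \eqref{Eq2.5}, \eqref{Eq2.6}, \eqref{Eq2.7} one brackets $\widetilde{H}_{i,1}=\Phi(H_{i,1})-\tfrac{\hbar}{2}\Phi(H_{i,0})^2$ against $\Phi(X^\pm_{j,0})$; the $y$-sums telescope to give $\pm a_{i,j}\Phi(X^\pm_{j,1})$, and in the two relations \eqref{Eq2.6}, \eqref{Eq2.7} that involve the affine node the extra factor $t^{\pm1}$ carried by $\Phi(X^\pm_{0,0})=\widehat{W}^{(1),s}_{q_s,1}t^{\pm1}$ shifts the sums and leaves, beyond that term, an anomalous multiple of $\widehat{W}^{(1),s}_{q_s,1}t^{\pm1}$; a computation identifies its coefficient with $\ve_s+\tfrac{q_s}{2}\hbar$ \emph{precisely} under the hypothesis $\tfrac{\ve_s+q_s\hbar}{\hbar}=k+N$ — this being governed by the levels $\zeta_g=k+N-q_{\cdot}$ of the affine currents entering $\widehat{W}^{(1),s}$ — which is the coefficient demanded by \eqref{Eq2.6}, \eqref{Eq2.7}. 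The quadratic relations \eqref{Eq2.8}, \eqref{Eq2.9} go the same way: the $\widehat{W}^{(2),s}$-contributions produce the symmetrized term $\pm a_{i,j}\tfrac{\hbar}{2}\{\Phi(X^\pm_{i,0}),\Phi(X^\pm_{j,0})\}$ via Theorem~\ref{Tho1}\,(2) together with the $W^{(2)}\times W^{(2)}$ OPE, the $y$-summed bilinears cancel in the difference $[X^\pm_{i,1},X^\pm_{j,0}]-[X^\pm_{i,0},X^\pm_{j,1}]$, and the affine node again contributes the $(\ve_s+\tfrac{q_s}{2}\hbar)$-term through the $t^{\pm1}$ shift.

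The main obstacle is exactly this last point: organizing the infinite sums $\sum_{y\ge0}$ so that their mismatch under $t\mapsto t^{\pm1}$ at the affine node collapses to a single explicit scalar, and checking that this scalar agrees with the one prescribed by \eqref{Eq2.6}, \eqref{Eq2.7}, \eqref{Eq2.9} exactly when $\ve_s=(k+N-q_s)\hbar$; everything else is bookkeeping that becomes routine once the identity is tested inside the associative algebra $\widehat{\bigotimes}_{i=1}^{s}U(\widehat{\mathfrak{gl}}(q_i))^{\otimes l_i}$, where products are genuinely associative and the normal-ordering corrections are controlled by $\kappa$. As a cross-check, in the rectangular case $s=1$ the formulas specialize to those of \cite{U4}, and for general $s$ the verification is formally the same computation with the block-dependent scalars $\zeta_g$ and $\gamma_c$ replacing their rectangular values.
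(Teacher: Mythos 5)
You should first be aware that this article does not prove Theorem \ref{Ue} at all: it is imported verbatim from Theorem 6.1 of \cite{U7}, and the only point where the present paper touches its proof is in Theorem \ref{May}, where well-definedness is reduced, via the injective Miura map $\widetilde{\mu}$, to the well-definedness of $\widetilde{\mu}\circ\Phi$ established in \cite{U7}. Your overall route — pass to the finite presentation of Proposition \ref{Prop32} and verify the images of \eqref{Eq2.1}--\eqref{Eq2.10} inside $\widehat{\bigotimes}_{i=1}^{s}U(\widehat{\mathfrak{gl}}(q_i))^{\otimes l_i}$ through $\widetilde{\mu}$ — is indeed the strategy of that reference (and of the rectangular case \cite{U4}), so the plan is pointed in the right direction; you also correctly read the hypothesis as $\ve_s=(k+N-q_s)\hbar$ and silently fix the typo $\Phi(X^-_{0,0})=\widehat{W}^{(1),s}_{1,q_s}t^{-1}$.

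As a proof, however, the proposal has genuine gaps. The decisive verifications are asserted rather than performed: for \eqref{Eq2.3}, \eqref{Eq2.5}--\eqref{Eq2.9} you say the $y$-sums ``telescope'' and that ``a computation identifies'' the anomalous scalar at the affine node with $\ve_s+\tfrac{q_s}{2}\hbar$; but this identification is exactly the substance of the theorem — it is where the levels $\zeta_g$, the block-boundary correction in Theorem \ref{Tho1}\,(2), and the central corrections from second-order poles all enter — and nothing in the proposal establishes it. Moreover, the tools you cite are insufficient as stated: the commutator in $\mathcal{U}(V)$ is $[ut^a,vt^b]=\sum_{r\ge0}\binom{a}{r}(u_{(r)}v)t^{a+b-r}$, so brackets such as $[\widehat{W}^{(2),s}_{i,i}t,\widehat{W}^{(1),s}_{p,q}t^m]$ require the higher products $(W^{(1)})_{(r)}W^{(2)}$ for $r\ge1$ and the second-order pole of $W^{(1)}\times W^{(1)}$, none of which appear in Theorem \ref{Tho1}; equivalently one needs the explicit Miura images of $W^{(2)}_{i,j}$, which are computed in \cite{U7} but neither in this paper nor in your sketch. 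Finally, the claim that \eqref{Eq2.1} is ``immediate from Theorem \ref{Tho1}\,(1)'' is false for $[H_{i,1},H_{j,1}]=0$: both images involve $W^{(2)}$ modes and quadratic $W^{(1)}$ expressions, so this relation needs $W^{(2)}\times W^{(2)}$ information (or a separate indirect argument), not merely the affine $\mathfrak{gl}(q_s)$ relations among the $W^{(1)}$ modes. In short, you have the correct framework, but what you submit is an outline that defers precisely the computations the theorem consists of.
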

By using $\Psi^{n,m}$, we construct other homomorphisms from the affine Yangian to the universal enveloping algebra of a non-rectangular $W$-algebra.
\begin{Theorem}\label{May}
Let us take an integer $1\leq z\leq s$ and suppose that $q_z-q_{z+1}\geq 3$ and assume that a complex number $\ve_z$ satisfies $\dfrac{\ve_z+q_z\hbar}{\hbar}=k+N$. For $1\leq i,j\leq q_z-q_{z+1}$, let $\widetilde{W}^{(r)}_{i,j}$ be $W^{(r)}_{q_{z}+1-i,q_{z}+1-j}$.Then, there exists an algebra homomorphism
\begin{equation*}
\Phi_z\colon Y_{\hbar,\ve}(\widehat{\mathfrak{sl}}(q_z-q_{z+1}))\to \mathcal{U}(\mathcal{W}^k(\mathfrak{gl}(N),f_{q_1,q_2,\cdots,q_s}^{l_1,l_2\cdots,l_s}))
\end{equation*}
determined by
\begin{gather*}
\Phi_z(X^+_{i,0})=\begin{cases}
\widetilde{W}^{(1),z}_{q_z-q_{z+1},1}t&\text{ if }i=0,\\
\widetilde{W}^{(1),z}_{i,i+1}&\text{ if }i\neq0,
\end{cases}
\quad \Phi_z(X^-_{i,0})=\begin{cases}
\widetilde{W}^{(1),z}_{1,q_z-q_{z+1}}t^{-1}&\text{ if }i=0,\\
\widetilde{W}^{(1),z}_{i+1,i}&\text{ if }i\neq0,
\end{cases}
\end{gather*}
\begin{align*}
\Phi_z(H_{i,1})&=-\hbar(\widetilde{W}^{(2),z}_{i,i}t-\widetilde{W}^{(2),z}_{i+1,i+1}t)-\dfrac{i}{2}\hbar\Phi(H_{i,0})+\hbar \widetilde{W}^{(1),z}_{i,i}\widetilde{W}^{(1),z}_{i+1,i+1}\\
&\quad+\hbar\displaystyle\sum_{y \geq 0}  \limits\displaystyle\sum_{u=1}^{i}\limits \widetilde{W}^{(1),z}_{i,u}t^{-y}\widetilde{W}^{(1),z}_{u,i}t^y+\hbar\displaystyle\sum_{y \geq 0} \limits\displaystyle\sum_{u=i+1}^{q_z-q_{z+1}}\limits \widetilde{W}^{(1),z}_{i,u}t^{-y-1} \widetilde{W}^{(1),z}_{u,i}t^{y+1}\\
&\quad-\hbar\displaystyle\sum_{y \geq 0}\limits\displaystyle\sum_{u=1}^{i}\limits \widetilde{W}^{(1),z}_{i+1,u}t^{-y} \widetilde{W}^{(1),z}_{u,i+1}t^y-\hbar\displaystyle\sum_{y \geq 0}\limits\displaystyle\sum_{u=i+1}^{q_z-q_{z+1}} \limits \widetilde{W}^{(1),z}_{i+1,u}t^{-y-1} \widetilde{W}^{(1),z}_{u,i+1}t^{y+1},\\
\Phi_z(X^+_{i,1})&=-\hbar \widetilde{W}^{(2),z}_{i,i+1}t-\dfrac{i}{2}\hbar\Phi(X_{i,0}^{+})\\
&\quad+\hbar\displaystyle\sum_{y \geq 0}\limits\displaystyle\sum_{u=1}^i\limits \widetilde{W}^{(1),z}_{i,u}t^{-y} \widetilde{W}^{(1),z}_{u,i+1}t^y+\hbar\displaystyle\sum_{y \geq 0}\limits\displaystyle\sum_{u=i+1}^{q_z-q_{z+1}}\limits \widetilde{W}^{(1),z}_{i,u}t^{-y-1} \widetilde{W}^{(1),z}_{u,i+1}t^{y+1},\\
\Phi_z(X^-_{i,1})&=-\hbar \widetilde{W}^{(2),z}_{i+1,i+1+1}t-\dfrac{i}{2}\hbar\Phi(X_{i,0}^{-})\\
&\quad+\hbar\displaystyle\sum_{y \geq 0}\limits\displaystyle\sum_{u=1}^i\limits \widetilde{W}^{(1),z}_{i+1,u}t^{-y} \widetilde{W}^{(1),z}_{u,i}t^y+\hbar\displaystyle\sum_{y \geq 0}\limits\displaystyle\sum_{u=i+1}^{q_z-q_{z+1}}\limits \widetilde{W}^{(1),z}_{i+1,u}t^{-y-1} \widetilde{W}^{(1),z}_{u,i}t^{y+1}
\end{align*}
for $i\neq0$.
\end{Theorem}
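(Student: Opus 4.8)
The plan is to deduce Theorem~\ref{May} from Theorem~\ref{Ue} by composing a suitable version of the homomorphism of \cite{U7} with the homomorphism $\Psi^{n,m}$ of Theorem~\ref{Main}. When $z=s$ one has $q_{z+1}=0$, the map $\Psi^{q_s-q_{s+1},q_s}$ is the identity, and the statement is exactly Theorem~\ref{Ue}; so from now on I assume $z<s$, whence $q_{z+1}\geq q_s\geq1$, $q_z-q_{z+1}<q_z$, and (using $q_z-q_{z+1}\geq3$) $q_z\geq4$.

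First I would observe that the construction of \cite{U7} underlying Theorem~\ref{Ue} does not really use that the distinguished block is the last one. Running the argument of Theorem~6.1 of \cite{U7} with $q_z$ in place of $q_s$, and with the family $\{\widehat{W}^{(r),s}_{i,j}\}$ replaced by the corresponding $W^{(1)}$- and $W^{(2)}$-elements whose row indices range over the rows occurring in the first $z$ blocks of the staircase, one obtains an algebra homomorphism
\begin{equation*}
\widehat{\Phi}_z\colon Y_{\hbar,\ve_z}(\widehat{\mathfrak{sl}}(q_z))\to \mathcal{U}(\mathcal{W}^k(\mathfrak{gl}(N),f_{q_1,q_2,\cdots,q_s}^{l_1,l_2\cdots,l_s})),\qquad \dfrac{\ve_z+q_z\hbar}{\hbar}=k+N,
\end{equation*}
given by the formulas of Theorem~\ref{Ue} with these replacements. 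The verification of the defining relations of Proposition~\ref{Prop32} for $\widehat{\Phi}_z$ rests only on the OPEs among the $W^{(1)}$- and $W^{(2)}$-elements collected in Theorem~\ref{Tho1} and in Section~3, and these have the same shape on any one block; so this step is, in effect, a citation of \cite{U7}.

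Next, $\widehat{\Phi}_z$ is compatible with the grading $\deg(H_{i,r})=0$, $\deg(X^\pm_{i,r})=0$ for $i\neq0$, $\deg(X^\pm_{0,r})=\pm1$ on the Yangian and the grading of $\mathcal{U}(\mathcal{W}^k(\mathfrak{gl}(N),f))$ by powers of $t$, because $X^+_{0,0}$ is sent to a degree-one element; hence it extends to the standard degreewise completion $\widetilde{Y}_{\hbar,\ve_z}(\widehat{\mathfrak{sl}}(q_z))$. I would then set $\Phi_z:=\widehat{\Phi}_z\circ\Psi^{q_z-q_{z+1},q_z}$, an algebra homomorphism with the source, target and (since $\Psi^{n,m}$ leaves $\ve$ unchanged) parameter asserted in Theorem~\ref{May}. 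It remains to check that $\Phi_z$ is given by the explicit formulas. For the degree-zero generators this is short: $\Psi^{q_z-q_{z+1},q_z}(X^+_{i,0})$ equals $E_{i,i+1}$ for $i\neq0$ and $E_{q_z-q_{z+1},1}t$ for $i=0$ (and likewise for $X^-_{i,0}$, $H_{i,0}$); writing $E_{q_z-q_{z+1},1}t$ as an iterated commutator of the Chevalley generators of $\widehat{\mathfrak{sl}}(q_z)$, applying $\widehat{\Phi}_z$, and telescoping by part~(1) of Theorem~\ref{Tho1}, one obtains exactly $\widetilde{W}^{(1),z}_{q_z-q_{z+1},1}t$.

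For $X^\pm_{i,1}$ and $H_{i,1}$ with $i\neq0$ the verification is the substantial part. By the displayed formula for $\Psi^{n,m}$, the elements $\Psi^{q_z-q_{z+1},q_z}(X^\pm_{i,1})$ and $\Psi^{q_z-q_{z+1},q_z}(H_{i,1})$ equal the naive lifts $X^\pm_{i,1}$, $H_{i,1}$ in $\widehat{\mathfrak{sl}}(q_z)$ minus explicit tails $\hbar\sum_{y\geq0}\sum_{u}E_{\bullet,u}t^{-y-1}E_{u,\bullet}t^{y+1}$; applying $\widehat{\Phi}_z$ and combining $\widehat{\Phi}_z$ of these tails with the $W^{(2)}$-terms and finite sums furnished by $\widehat{\Phi}_z(X^\pm_{i,1})$, $\widehat{\Phi}_z(H_{i,1})$ should reproduce the right-hand sides of Theorem~\ref{May}. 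The step I expect to be the main obstacle is exactly this last identification: one must pair off the two families of $\hbar$-tails term by term, keep track of the index relabelling $p\leftrightarrow q_z+1-p$ between the two conventions for the $\widetilde{W}$'s, and confirm that none of the correction terms $\delta(j>q_1-q_a,\ i\leq q_1-q_a)(W^{(1)}_{i,q})_{(-1)}W^{(1)}_{p,j}$ of Theorem~\ref{Tho1}(2), nor their analogues among the $W^{(2)}$-OPEs of Section~3, contribute; this holds because all row indices occurring lie within a single block, so $i>q_1-q_a$ throughout. It is precisely the simplicity of $\Psi^{n,m}$, compared with $\widetilde{\Psi}^{m-n,m}$ of \cite{U12}, that keeps this computation short, and no use of $\widetilde{\Psi}^{q_{z+1},q_z}$ is needed.
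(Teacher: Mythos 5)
Your overall strategy --- obtain $\Phi_z$ by composing a Theorem~\ref{Ue}-type homomorphism attached to the block of size $q_z$ with $\Psi^{q_z-q_{z+1},q_z}$ --- is the same as the paper's, but the intermediate map you build does not exist, and this is a genuine gap. You take $\widehat{\Phi}_z\colon Y_{\hbar,\ve_z}(\widehat{\mathfrak{sl}}(q_z))\to \mathcal{U}(\mathcal{W}^k(\mathfrak{gl}(N),f_{q_1,\cdots,q_s}^{l_1,\cdots,l_s}))$, i.e.\ into the universal enveloping algebra of the \emph{full} $W$-algebra, defined by the formulas of Theorem~\ref{Ue} with currents whose row indices run over the last $q_z$ rows. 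Those rows span the blocks $z,z+1,\dots,s$, and whenever two rows $p,q$ lie in different blocks, one of $W^{(1)}_{p,q}$, $W^{(1)}_{q,p}$ (the one whose first row index lies in the deeper block) is \emph{not} an element of $\mathcal{W}^k(\mathfrak{gl}(N),f_{q_1,\cdots,q_s}^{l_1,\cdots,l_s})$: the cancellation \eqref{ee380} in the proof of Theorem~\ref{Generators} uses precisely the hypothesis $b\leq a$, and for $b>a$ the element is not $d_0$-closed (e.g.\ for $s=2$, $l_1=l_2=1$, $q_1=2$, $q_2=1$ one has $d_0(e_{2,1}[-1])=\psi_{3,1}[-1]\neq0$). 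The formulas of Theorem~\ref{Ue} need both directions --- the Chevalley-type currents across each block boundary and one factor in every cross-block quadratic tail --- so $\widehat{\Phi}_z$ is not even well posed, and your claim that the verification ``rests only on OPEs that have the same shape on any one block'' is exactly what fails: the indices do not stay in one block. (This is also why in Section~5 only $Y(\mathfrak{gl}(q_u-q_{u+1}))$, not $Y(\mathfrak{gl}(q_u))$, maps to the finite $W$-algebra, and why the naive extensions in Section~7 break down; an unshifted $\widehat{\Phi}_z$ of the kind you assert would make shifted Yangians unnecessary.)

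The paper's proof avoids this by taking the intermediate homomorphism $\Phi^{q_1,\cdots,q_z}_z$ into the universal enveloping algebra of the \emph{truncated} $W$-algebra $\mathcal{W}^{k+\sum_{v=z+1}^{s}l_vq_v}(\mathfrak{gl}(\sum_{i=1}^{z}l_iq_i),f_{q_1,\cdots,q_z}^{l_1,\cdots,l_z})$, in which block $z$ is the last block, so all currents with row indices among the last $q_z$ rows do exist there; the level is shifted so that $k'+N'=k+N$, hence the constants $\zeta_g$ and the Miura images are unchanged and the hypothesis $\frac{\ve_z+q_z\hbar}{\hbar}=k+N$ is the right one. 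One then composes with $\Psi^{q_z-q_{z+1},q_z}$ and, rather than cancelling terms inside any single $W$-algebra, checks the identity $\widetilde{\mu}\circ\Phi_z=\sigma\circ\Phi^{q_1,\cdots,q_z}_z\circ\Psi^{q_z-q_{z+1},q_z}$ inside $\widehat{\bigotimes}_{i=1}^{s}U(\widehat{\mathfrak{gl}}(q_i))^{\otimes l_i}$ and concludes well-definedness of $\Phi_z$ from the injectivity of the Miura map; that comparison is where your ``pairing off of the $\hbar$-tails'' legitimately takes place. Two smaller points: the case $z=s$ is not literally Theorem~\ref{Ue}, since there the currents are the reversed ones $\widehat{W}^{(1),s}_{i,j}=\widetilde{W}^{(1),s}_{q_s-i,q_s-j}$, and the paper spends the first half of its proof transferring well-definedness across this relabelling (the structure constants computed through $\widetilde{\mu}$ depend only on Kronecker deltas of the indices); and in either approach one must, as you note, extend the intermediate homomorphism to the degreewise completion before composing with $\Psi^{q_z-q_{z+1},q_z}$.
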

\begin{proof}
First, we show the case that $z=s$. Let us set
\begin{equation*}
f_{A,B}^{x,y}(i,j,p,q)=\widetilde{\mu}([\widetilde{W}^{(A),s}_{i,j}t^x,\widetilde{W}^{(B),s}_{p,q}t^y])\text{ for }1\leq i,j,p,q\leq n,A,B=0,1,x,y\in\mathbb{Z}.
\end{equation*}
By the definition of $W^{(r)}_{i,j}$, $f_{A,B}^{x,y}(i,j,p,q)$ can be written by $\widehat{\bigotimes}_{1\leq i\leq s}U(\widehat{\mathfrak{gl}}(q_i))^{\otimes l_i}$ and $\delta_{C,D}$, where $C,D=i,j,p,q$. Since $\Phi_s$ is a homomorphism defined by replacing $\widehat{W}^{(r)}_{i,j}$ in $\Phi$ with $\widetilde{W}^{(r)}_{i,j}$, the well-definedness of $\widetilde{\mu}\circ\Phi$ induces the one of $\widetilde{\mu}\circ\Phi_s$. 

Next, we show the general case. 
Since we have already shown the case that $z=s$, we have a homomorphism
\begin{align*}
\Phi^{q_1,\cdots,q_z}_z\colon Y_{\hbar,\ve_z}(\widehat{\mathfrak{sl}}(q_z))\to\mathcal{U}(\mathcal{W}^{k+\sum_{v=z+1}^s l_vq_v}(\mathfrak{gl}(\sum_{i=1}^zl_iq_i),f_{q_1,q_2,\cdots,q_z}^{l_1,l_2\cdots,l_z}))
\end{align*}
given by the same formula as $\Phi^{q_1,\cdots,q_s}_s$ under the assumption that $\dfrac{\ve_z+q_z\hbar}{\hbar}=k+N$.
Then, we obtain a homomorphism
\begin{equation*}
\Phi^{q_1,\cdots,q_z}_z\circ\Psi^{q_z-q_{z+1},q_z}\colon Y_{\hbar,\ve_z}(\widehat{\mathfrak{sl}}(q_z-q_{z+1}))\to\mathcal{U}(\mathcal{W}^{k+\sum_{v=z+1}^s l_vq_v}(\mathfrak{gl}(\sum_{i=1}^zl_iq_i),f_{q_1,q_2,\cdots,q_z}^{l_1,l_2\cdots,l_z})).
\end{equation*}
Let us set $\sigma$ be a natural embedding from $\widehat{\bigotimes}_{i=1}^{z}U(\widehat{\mathfrak{gl}}(q_i))^{\otimes l_i}$ to $\widehat{\bigotimes}_{i=1}^{s}U(\widehat{\mathfrak{gl}}(q_i))^{\otimes l_i}$. Since $\widetilde{\mu}\circ\Phi_z=\sigma\circ\Phi^{q_1,\cdots,q_z}_z\circ\Psi^{q_z-q_{z+1},q_z}$ holds, the well-definedness of $\Phi_z$ is shown by the injectivity of the Miura-map.
\end{proof}
\section{Finite Yangians and non-rectangular finite $W$-algebras of type $A$}
The affine Yangian $Y_{\hbar,\ve}(\widehat{\mathfrak{sl}}(n))$ has a subalgebra $Y_{\hbar}(\mathfrak{sl}(n))$, which is a deformation of the universal enveloping algebra of the current algebra associated with $\mathfrak{sl}(n)$.
\begin{Definition}
Suppose that $n\geq3$. The finite Yangian $Y_{\hbar}(\mathfrak{sl}(n))$ is an associative algebra generated by 
\begin{equation*}
\{X^\pm_{i,r},H_{i,r}\mid 1\leq i\leq n-1,r\in\mathbb{Z}_{\geq0}\}
\end{equation*}
subject to the relations \eqref{Eq2.1}-\eqref{Eq2.5}, \eqref{Eq2.8} and \eqref{Eq2.10}.
\end{Definition}
We define a natural homomorphism 
\begin{equation*}
\kappa\colon Y_{\hbar}(\mathfrak{sl}(n))\to Y_{\hbar,\ve}(\widehat{\mathfrak{sl}}(n)),\ A_{i,r}\mapsto A_{i,r}
\end{equation*}
for $A=H,X^\pm$. By using $\Phi^{q_1,\cdots,q_u}_u$, we can construct a homomorphism from the finite Yangian $Y_{\hbar}(\mathfrak{sl}(q_u-q_{u+1}))$ to the finite $W$-algebra of type $A$ (see Section 6 in \cite{U4}). 

We set a degree on the $W$-algebra  $\mathcal{W}^k(\mathfrak{gl}(N),f_{q_1,q_2,\cdots,q_s}^{l_1,l_2\cdots,l_s})$ by $\text{deg}(W^{(r)}_{i,j}t^s)=s-r+1$. By Theorem~A.2.11 in \cite{NT}, the finite $W$-algebra $\mathcal{W}^{\text{fin}}(\mathfrak{gl}(N),f_{q_1,q_2,\cdots,q_s}^{l_1,l_2\cdots,l_s})$ can be defined as $\mathcal{U}_0/\sum_{r>0}\limits\mathcal{U}_{-r}\mathcal{U}_r$, where $\mathcal{U}_d$ is the set of degree $d$ elements of $W^k(\mathfrak{gl}(N),f_{q_1,q_2,\cdots,q_s}^{l_1,l_2\cdots,l_s})$. 
Let $\tilde{p}$ be a natural projection from $\mathcal{U}_0$ to $\mathcal{W}^{\text{fin}}(\mathfrak{gl}(N),f_{q_1,q_2,\cdots,q_s}^{l_1,l_2\cdots,l_s})$.

Since the image of $\Phi^{q_1,\cdots,q_u}_u$ is contained in $\mathcal{U}_0$, we obtain the homomorphism
\begin{equation*}
\Phi^{q_1,\cdots,q_u}_{f,u}\colon Y_{\hbar}(\mathfrak{sl}(q_u-q_{u+1}))\to \mathcal{W}^{\text{fin}}(\mathfrak{gl}(N),f_{q_1,q_2,\cdots,q_s}^{l_1,l_2\cdots,l_s}).
\end{equation*}
In \cite{BK}, Brundan-Kleshchev constructed a homomorphism from a shifted Yangian to a finite $W$-algebra of type $A$. By restricting this homomorphism, we can obtain a homomorphism from the Yangian associated with $\mathfrak{gl}(n)$ to the finite $W$-algebra. Let us recall the definition of the Yangian associated with $\mathfrak{gl}(n)$ (\cite{D1} and \cite{D2}).
\begin{Definition}
The Yangian $Y(\mathfrak{gl}(n))$ is an associative algebra whose generators are $\{t^{(r)}_{i,j}\mid r\geq0, 1\leq i,j\leq n\}$ and defining relations are
\begin{gather*}
t^{(0)}_{i,j}=\delta_{i,j},\\
[t^{(r+1)}_{i,j},t^{(s)}_{u,v}]-[t^{(r)}_{i,j},t^{(s+1)}_{u,v}]=t^{(r)}_{u,j}t^{(s)}_{i,v}-t^{(s)}_{u,j}t^{(r)}_{i,v}.
\end{gather*}
\end{Definition}
In the case when $\hbar\neq0$, the finite Yangian $Y_\hbar(\mathfrak{sl}(n))$ can be embedded into the Yangian $Y(\mathfrak{gl}(n))$ by 
\begin{gather*}
\iota_\hbar(h_{i,0})=t^{(1)}_{i,i}-t^{(1)}_{i+1,i+1},\quad \iota_\hbar(x^+_{i,0})=t^{(1)}_{i,i+1},\quad \iota_\hbar(x^-_{i,0})=t^{(1)}_{i+1,i},
\end{gather*}
\begin{align*}
\iota_\hbar(h_{i,1})&=
-\hbar t^{(2)}_{i,i}+\hbar t^{(2)}_{i+1,i+1}-\dfrac{i}{2}\hbar(t^{(1)}_{i,i}-t^{(1)}_{i+1,i+1}) -\hbar t^{(1)}_{i,i}t^{(1)}_{i+1,i+1} \\
&\quad+ \hbar\displaystyle\sum_{u=1}^{i}\limits t^{(1)}_{i,u}t^{(1)}_{u,i}-\hbar\displaystyle\sum_{u=1}^{i}\limits t^{(1)}_{i+1,u}t^{(1)}_{u,i+1}.
\end{align*}

Brundan-Kleshchev \cite{BK} constructed a homomorphism from the shifted Yangian to the $W$-algebra $\mathcal{W}^{\text{fin}}(\mathfrak{gl}(N),f_{q_1,q_2,\cdots,q_s}^{l_1,l_2\cdots,l_s})$. For $1\leq u\leq s$, the shifted Yangian contains the finite Yangian $Y(\mathfrak{gl}(q_u-q_{u+1}))$ by mapping $D^{(u)}_{i,j,r}$ in \cite{BK} to $t^{(r)}_{i,j}$.
By restricting the homomorphism to $Y(\mathfrak{gl}(q_u-q_{u+1}))$, we obtain the following homomorphism
\begin{equation*}
\widetilde{\Phi}_u\colon Y(\mathfrak{gl}(q_u-q_{u+1}))\to\mathcal{W}^{\text{fin}}(\mathfrak{gl}(N),f_{q_1,q_2,\cdots,q_s}^{l_1,l_2\cdots,l_s})
\end{equation*}
satisfying that
\begin{equation*}
\widetilde{\Phi}_u(t^{(1)}_{i,j})=\tilde{p}(\widetilde{W}^{(1)}_{i,j}),\ \widetilde{\Phi}_u(t^{(2)}_{i,j})=\tilde{p}(\widetilde{W}^{(2)}_{i,j}t)
\end{equation*}
for $1\leq i,j\leq q_u-q_{u+1}$.
By a direct computation, we obtain the following theorem.
\begin{Theorem}\label{cores}
The following relation holds:
\begin{equation*}
\widetilde{p}\circ\Phi_u\circ\kappa=\widetilde{\Phi}_u\circ\iota_{-1}.
\end{equation*}
\end{Theorem}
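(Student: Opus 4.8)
The plan is to verify that the two algebra homomorphisms agree on a set of algebra generators of $Y_{-1}(\mathfrak{sl}(q_u-q_{u+1}))$ and then conclude by the homomorphism property; here the relevant value of the parameter is $\hbar=-1$, which is forced if the two composites $\widetilde p\circ\Phi_u\circ\kappa$ and $\widetilde\Phi_u\circ\iota_{-1}$ are to have the same source. By Proposition~\ref{Prop32} and the definition of the finite Yangian, $Y_{-1}(\mathfrak{sl}(q_u-q_{u+1}))$ is generated by $X^\pm_{i,0},H_{i,0},X^\pm_{i,1},H_{i,1}$ with $1\le i\le q_u-q_{u+1}-1$, so it suffices to compare $\widetilde p\bigl(\Phi_u(\kappa(g))\bigr)$ with $\widetilde\Phi_u(\iota_{-1}(g))$ for each such $g$. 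Since $\kappa$ carries $A_{i,r}$ to the element labelled $A_{i,r}$ in the affine Yangian and the indices $1\le i\le q_u-q_{u+1}-1$ correspond to $i\neq 0$, only the $i\neq 0$ branches of the formulas in Theorem~\ref{May} enter; in particular the image of $\Phi_u\circ\kappa$ consists of degree-$0$ elements, i.e.\ lies in $\mathcal{U}_0$, so $\widetilde p$ may be applied.

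For the degree-$0$ generators the identity is immediate. One has $\Phi_u(\kappa(X^+_{i,0}))=\widetilde W^{(1)}_{i,i+1}$, $\Phi_u(\kappa(X^-_{i,0}))=\widetilde W^{(1)}_{i+1,i}$, and, evaluating $[\widetilde W^{(1)}_{i,i+1},\widetilde W^{(1)}_{i+1,i}]$ in $\mathcal{U}(\mathcal{W}^k(\mathfrak{gl}(N),f_{q_1,q_2,\cdots,q_s}^{l_1,l_2\cdots,l_s}))$ via Theorem~\ref{Tho1}(1), $\Phi_u(\kappa(H_{i,0}))=\widetilde W^{(1)}_{i,i}-\widetilde W^{(1)}_{i+1,i+1}$. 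Comparing with $\iota_{-1}(X^+_{i,0})=t^{(1)}_{i,i+1}$, $\iota_{-1}(X^-_{i,0})=t^{(1)}_{i+1,i}$, $\iota_{-1}(H_{i,0})=t^{(1)}_{i,i}-t^{(1)}_{i+1,i+1}$, the claim follows by applying $\widetilde p$ together with $\widetilde\Phi_u(t^{(1)}_{i,j})=\widetilde p(\widetilde W^{(1)}_{i,j})$.

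The content of the theorem lies in the degree-$1$ generators. The crucial point is that $\mathcal{W}^{\text{fin}}(\mathfrak{gl}(N),f_{q_1,q_2,\cdots,q_s}^{l_1,l_2\cdots,l_s})=\mathcal{U}_0/\sum_{r>0}\mathcal{U}_{-r}\mathcal{U}_r$ and $\text{deg}(W^{(1)}_{i,j}t^s)=s$, so the projection $\widetilde p$ kills every summand $\widetilde W^{(1)}_{i,a}t^{-y}\widetilde W^{(1)}_{a,i}t^{y}\in\mathcal{U}_{-y}\mathcal{U}_{y}$ with $y\ge1$ and every summand $\widetilde W^{(1)}_{i,a}t^{-y-1}\widetilde W^{(1)}_{a,i}t^{y+1}\in\mathcal{U}_{-(y+1)}\mathcal{U}_{y+1}$ with $y\ge0$ occurring in the formulas for $\Phi_u(H_{i,1})$ and $\Phi_u(X^\pm_{i,1})$. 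Hence, after putting $\hbar=-1$ and applying $\widetilde p$, these images collapse to expressions in the degree-$0$ elements $\widetilde W^{(1)}_{a,b}$ — only the $y=0$ terms of the first double sums of Theorem~\ref{May} survive — and the degree-$0$ elements $\widetilde W^{(2)}_{a,b}t$. I would then use $\widetilde p(\widetilde W^{(1)}_{a,b})=\widetilde\Phi_u(t^{(1)}_{a,b})$, $\widetilde p(\widetilde W^{(2)}_{a,b}t)=\widetilde\Phi_u(t^{(2)}_{a,b})$ and the fact that $\widetilde\Phi_u$ is an algebra homomorphism to rewrite the result as $\widetilde\Phi_u$ applied to an explicit element of $Y(\mathfrak{gl}(q_u-q_{u+1}))$, and check that this element is exactly $\iota_{-1}(H_{i,1})$, resp.\ $\iota_{-1}(X^\pm_{i,1})$, as given by the formula for $\iota_\hbar$ at $\hbar=-1$.

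I expect the main obstacle to be the bookkeeping of the quadratic monomials and of normal-ordering. Two different sources of quadratic terms have to be reconciled: the surviving $y=0$ double sums $\sum_{a}\widetilde W^{(1)}_{\bullet,a}\widetilde W^{(1)}_{a,\bullet}$ coming from $\Phi_u$, and the quadratic pieces already built into $\widetilde W^{(2)}_{a,b}$ (see the definition of $W^{(2)}_{p,q}$ in Theorem~\ref{Generators}) which are carried along by $\widetilde p(\widetilde W^{(2)}_{a,b}t)$; along the way one must keep consistent track of the index-reversal convention $\widetilde W^{(r)}_{i,j}=W^{(r)}_{q_u+1-i,q_u+1-j}$ of Theorem~\ref{May} and of the reordering relations \eqref{241} defining $\mathcal{U}(\mathcal{W}^k(\mathfrak{gl}(N),f_{q_1,q_2,\cdots,q_s}^{l_1,l_2\cdots,l_s}))$. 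Once the quadratic parts are organised correctly, the matching of the remaining linear $-\frac{i}{2}\hbar$-terms and of the $\widetilde W^{(2)}t$-terms against $\iota_{-1}$ is automatic, so the two homomorphisms agree on all generators and therefore coincide.
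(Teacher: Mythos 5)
Your proposal follows essentially the same route as the paper: Theorem~\ref{cores} is established there precisely by this direct generator-by-generator computation (on $X^\pm_{i,0},H_{i,0},H_{i,1}$ at $\hbar=-1$), using that $\tilde{p}$ kills every product in $\mathcal{U}_{-r}\mathcal{U}_r$ with $r>0$, so that only the $y=0$ terms of the sums in Theorem~\ref{May} survive and the images reduce to finite expressions matched against $\iota_{-1}$ via $\widetilde{\Phi}_u(t^{(1)}_{i,j})=\tilde{p}(\widetilde{W}^{(1)}_{i,j})$ and $\widetilde{\Phi}_u(t^{(2)}_{i,j})=\tilde{p}(\widetilde{W}^{(2)}_{i,j}t)$. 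The paper records none of this bookkeeping (it states only that the relation follows ``by a direct computation''), so your outline is, if anything, more explicit than the paper's own argument.
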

By Theorem~\ref{cores}, $\Phi$ is an affine analogue of the restriction of the homomorphism in \cite{BK}.

\section{The embedding of the finite Yangian of type $A$}
By the definition of $Y(\mathfrak{gl}(n))$, we have an embedding
\begin{equation*}
\widetilde{\Psi}^{n,m}_f\colon Y(\mathfrak{gl}(n))\to Y(\mathfrak{gl}(m)),\ t^{(r)}_{i,j}\mapsto t^{(r)}_{i,j}
\end{equation*}
for $m>n$.
By the definition of $\iota_\hbar$ and $\widetilde{\Psi}^{n,n+1}_f$, we find an embedding
\begin{equation*}
\Psi_f^{n,m}\colon Y_\hbar(\mathfrak{sl}(n))\to Y_\hbar(\mathfrak{sl}(m))
\end{equation*}
given by $A_{i,r}\mapsto A_{i,r}$ for $A=H,X^\pm$.
Let us consider the case that $s=1$ and $l_1=l,q_1=n$.
We denote by $\Phi^{n,l}$ the homomorphism $\Phi_1^{n}$ in the case that $s=1$, $l_1=l$ and $q_1=n$. We also denote this $W$-algebra by $\mathcal{W}^{k}(\mathfrak{gl}(ln),(l^{n}))$.
In the same way as Section 5 of \cite{U8}, we can give the embedding
\begin{equation*}
\iota\colon\mathcal{W}^{k}(\mathfrak{gl}(2n),(2^{n}))\to\mathcal{W}^{k}(\mathfrak{gl}(2(n+1)),(2^{n+1})),\ \widetilde{W}^{(r)}_{i,j}\mapsto \widetilde{W}^{(r)}_{i,j}.
\end{equation*}
In the same way as Theorem 5.5 of \cite{U8}, we can show the relation
\begin{equation*}
\Phi^{n+1,2}\circ\Psi^{n,n+1}=\iota\circ\Phi^{n,2}.
\end{equation*} 
Since $\iota$ is compatible with the degree of the $W$-algebra, $\iota$ induces the embedding
\begin{equation*}
\bar{\iota}\colon \mathcal{W}^{\text{fin}}(\mathfrak{gl}(2n),(2^n))\to\mathcal{W}^{\text{fin}}(\mathfrak{gl}(2(n+1)),(2^{n+1})),\ \tilde{p}(\widetilde{W}^{(r)}_{i,j}t^{r-1})\mapsto\tilde{p}(\widetilde{W}^{(r)}_{i,j}t^{r-1}).
\end{equation*}
Let us define a homomorphism $\Phi^{n,l}$ as $\widetilde{p}\circ\Phi^{n,l}\circ \kappa$. By the definition of $\Phi^{n,l}_f$, we find that
\begin{equation*}
\bar{\iota}\circ\Phi^{n,2}_f=\Phi^{n+1,2}_f\circ\Psi^{n,n+1}_f.
\end{equation*}
Thus, $\Psi^{n,m}$ can be regarded as an affine analogue of $\Psi^{n,m}_f$.

\section{Discussion of the new definition of the shifted affine Yangian}
In \cite{U11}, we constructed another homomorphism from the affine Yangian associated with $\widehat{\mathfrak{sl}}(m-n)$ to the one associated with $\widehat{\mathfrak{sl}}(m)$.
\begin{Theorem}
For positive integers $m,n$ satisfying that $m-n\geq3$, there exists a homomorphism
\begin{equation*}
\widetilde{\Psi}^{m-n,m}\colon Y_{\hbar,\ve+n\hbar}(\widehat{\mathfrak{sl}}(m-n))\to \widetilde{Y}_{\hbar,\ve}(\widehat{\mathfrak{sl}}(m))
\end{equation*}
given by
\begin{gather*}
\widetilde{\Psi}^{n,m}(X^+_{i,0})=\begin{cases}
E_{m,m-n+1}t&\text{ if }i=0,\\
E_{n+i,n+i+1}&\text{ if }i\neq 0,
\end{cases}\ 
\widetilde{\Psi}^{m-n,m}(X^-_{i,0})=\begin{cases}
E_{n+1,m}t^{-1}&\text{ if }i=0,\\
E_{n+i+1,n+i}&\text{ if }i\neq 0,
\end{cases}
\end{gather*}
and
\begin{align*}
\widetilde{\Psi}^{m-n,m}(H_{i,1})&= H_{i+n,1}+\hbar\displaystyle\sum_{s \geq 0}\limits\sum_{u=1}^n E_{u,n+i}t^{-s-1}E_{n+i,u}t^{s+1} \\
&\quad-\hbar\displaystyle\sum_{s \geq 0}\limits\sum_{u=1}^n E_{u,n+i+1}t^{-s-1} E_{n+i+1,u}t^{s+1},\\
\widetilde{\Psi}^{m-n,m}(X^+_{i,1})&=X^+_{i+n,1}+\hbar\displaystyle\sum_{s \geq 0}\limits\sum_{u=1}^n E_{u,n+i+1}t^{-s-1}E_{n+i,u}t^{s+1},\\
\widetilde{\Psi}^{m-n,m}(X^-_{i,1})&= X^-_{i+n,1}+\hbar\displaystyle\sum_{s \geq 0}\limits\sum_{u=1}^n E_{u,n+i}t^{-s-1}E_{n+i+1,u}t^{s+1}
\end{align*}
for $i\neq0$.
\end{Theorem}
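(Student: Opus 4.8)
The plan is to verify that the proposed images of the generators satisfy the defining relations of $Y_{\hbar,\ve+n\hbar}(\widehat{\mathfrak{sl}}(m-n))$ in the finite presentation of Proposition~\ref{Prop32}, working inside the degreewise completion $\widetilde{Y}_{\hbar,\ve}(\widehat{\mathfrak{sl}}(m))$. Write $\mathcal{X}^\pm_{i,r}$, $\mathcal{H}_{i,r}$ for the proposed images of $X^\pm_{i,r}$, $H_{i,r}$. The first observation is that the degree-zero images $\mathcal{X}^\pm_{i,0}$ ($0\le i\le m-n-1$) generate exactly the copy of $\widehat{\mathfrak{sl}}(m-n)$ inside $\widehat{\mathfrak{sl}}(m)\subset Y_{\hbar,\ve}(\widehat{\mathfrak{sl}}(m))$ supported on the nodes $n+1,\dots,m-1$ together with the ``long'' affine node joining positions $m$ and $n+1$; computing the central term of the relevant bracket one checks that the canonical central element is carried to the canonical central element. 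Consequently relations \eqref{Eq2.2}, \eqref{Eq2.10}, and the cases $r=s=0$ of \eqref{Eq2.1} and \eqref{Eq2.4} hold automatically, and $\mathcal{H}_{i,0}$ is pinned down.

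The substance of the theorem lies in the relations involving the degree-one images, namely \eqref{Eq2.3}, \eqref{Eq2.5}--\eqref{Eq2.9} (together with the $r=1$ instances of \eqref{Eq2.1} and \eqref{Eq2.4}). Each of $\mathcal{H}_{i,1}$, $\mathcal{X}^\pm_{i,1}$ is a ``leading term'' $H_{n+i,1}$, resp.\ $X^\pm_{n+i,1}$, corrected by an $\hbar$-multiple of an infinite series of the shape $\sum_{s\ge0}\sum_{u=1}^{n}E_{u,n+i}t^{-s-1}E_{n+i,u}t^{s+1}$ and its variants; these series are not elements of $Y_{\hbar,\ve}(\widehat{\mathfrak{sl}}(m))$ but do converge in $\widetilde{Y}_{\hbar,\ve}(\widehat{\mathfrak{sl}}(m))$ for the grading of Section~2, which is precisely why the target must be the completion. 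The crucial lemmas, just as in the proof of Theorem~\ref{Main} in \cite{U8}, are the bracket identities of these correction series against matrix units $E_{a,b}t^{c}$: since the summation index $u$ ranges over $\{1,\dots,n\}$ while the ``interacting'' index is $n+i$ or $n+i+1$, the brackets telescope and only finitely many boundary terms survive. Checking the non-affine relations \eqref{Eq2.3}, \eqref{Eq2.5}, \eqref{Eq2.8} then reduces to matching the few surviving terms against $\mathcal{H}_{i,r+s}$ or the relevant right-hand side, which is routine bookkeeping.

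The point at which the source parameter changes from $\ve$ to $\ve+n\hbar$ is in the affine-edge relations \eqref{Eq2.6}, \eqref{Eq2.7}, \eqref{Eq2.9}; for $Y_{\hbar,\ve+n\hbar}(\widehat{\mathfrak{sl}}(m-n))$ these carry the coefficient $\ve+n\hbar+\frac{m-n}{2}\hbar$. On the target side the affine edge of the source is realized by ``long'' generators sitting $n$ nodes away from the affine node of $\widehat{\mathfrak{sl}}(m)$, so that when $\widetilde{\mathcal{H}}_{0,1}$ (resp.\ $\widetilde{\mathcal{H}}_{m-n-1,1}$) is commuted with $\mathcal{X}^\pm_{m-n-1,0}$ (resp.\ $\mathcal{X}^\pm_{0,0}$) the correction series produces, on top of the ambient $\ve$-twist coming from \eqref{Eq1.6}--\eqref{Eq1.7} of $\widehat{\mathfrak{sl}}(m)$, one scalar contribution of order $\hbar$ for each of the $n$ skipped nodes, assembling into exactly the extra $n\hbar$. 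Keeping these infinite series under control in the completion and tracking the coefficients around the affine edge is the main obstacle; everything else is routine once the telescoping identities are in place. Alternatively, one might try to deduce the statement by composing $\Psi^{m-n,m}$ with an $n$-fold rotation of the extended Dynkin diagram of $\widehat{\mathfrak{sl}}(m)$, but promoting such a rotation to an honest homomorphism of affine Yangians (carrying its own parameter shift) is of comparable difficulty, so the direct verification is the more economical route.
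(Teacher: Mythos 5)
Two remarks before the substance: this paper never proves the statement you were given --- it is imported verbatim from \cite{U11}, and the present article only uses it --- so the only meaningful benchmark is the method used there and for the companion map of Theorem~\ref{Main} (proved in \cite{U8}), namely direct verification of the finite presentation of Proposition~\ref{Prop32} inside the degreewise completion. That is exactly the strategy you propose, and your structural observations are sound: the corrections lie in the completion and not in the Yangian itself, and the commutators against $E_{a,b}t^c$ telescope because the summation index $u$ runs over $1,\dots,n$ while the interacting indices are $n+i$, $n+i+1$. (You also silently read $E_{m,m-n+1}t$ as $E_{m,n+1}t$; this is indeed the only reading compatible with $X^-_{0,0}\mapsto E_{n+1,m}t^{-1}$ and $[X^+_{0,0},X^-_{0,0}]=H_{0,0}$.)

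Two points in your sketch are genuine gaps rather than omitted routine. (i) The theorem assigns images to $H_{i,1},X^{\pm}_{i,1}$ only for $i\neq 0$, yet your verification of the affine-edge relations commutes $\widetilde{\mathcal H}_{0,1}$ with $\mathcal X^{\pm}_{m-n-1,0}$. Before that step makes sense you must \emph{define} the images of $X^{\pm}_{0,1}$ and $H_{0,1}$, e.g. by setting $\mathcal X^{+}_{0,1}:=-[\widetilde{\mathcal H}_{1,1},\mathcal X^{+}_{0,0}]$ (legitimate because $m-n\geq 3$ makes $(1,0)$ a non-affine edge in \eqref{Eq2.5}), and then check every remaining relation of Proposition~\ref{Prop32} in which these derived elements occur --- \eqref{Eq2.3} at $i=0$, \eqref{Eq2.6}, \eqref{Eq2.7}, \eqref{Eq2.9} and the Serre relations. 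This is where the bulk of the computation in \cite{U8}, \cite{U11} actually lives; it cannot be subsumed under ``routine bookkeeping.'' (ii) The parameter accounting, which is the entire content of the shift $\ve\mapsto\ve+n\hbar$, is stated too loosely to certify. The coefficient to be produced on the source side is $(\ve+n\hbar)+\frac{m-n}{2}\hbar$, while the ambient relations \eqref{Eq1.6}--\eqref{Eq1.7} of $\widehat{\mathfrak{sl}}(m)$ supply $\ve+\frac{m}{2}\hbar$; the discrepancy the corrections must generate is therefore $\frac{n}{2}\hbar$, i.e.\ $\frac{\hbar}{2}$ per skipped node as the long root vector $E_{m,n+1}t$ is threaded through nodes $1,\dots,n$, not ``one contribution of order $\hbar$ per node assembling into $n\hbar$.'' The $n\hbar$ of the statement is the sum of this $\frac{n}{2}\hbar$ and the built-in difference $\frac{m}{2}\hbar-\frac{m-n}{2}\hbar=\frac{n}{2}\hbar$ between the two affine-edge constants. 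Your conclusion is the right one, but since the theorem is precisely the assertion that this constant comes out to $\ve+n\hbar$, the verification has to be carried out with the exact coefficients rather than by an order-of-magnitude count.
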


We denote the set of simple roots of $\widehat{\mathfrak{sl}}(m)$ by $\{\alpha_i\}_{i=0}^{m-1}$ and its dual basis by $\{\alpha^*_i\}_{i=0}^{m-1}$. 
The shifted affine Yangian associated with $\widehat{\mathfrak{sl}}(m)$ and its anti-dominant weight $\beta$ is defined as a subalgebra of $Y_{\hbar,\ve}(\widehat{\mathfrak{sl}}(m))$ generated by
\begin{equation*}
\{X^\pm_{i,r^\pm_i},H_{i,r_i}\mid 0\leq i\leq m-1,r^+_i\geq0,r_i,r^-_i\geq-\beta(\alpha_i)\}.
\end{equation*}
Creutzig-Diaconescu-Ma conjectured that there exists a surjective homomorphism from the shifted affine Yangian to the universal enveloping algebra of a non-rectangular $W$-algebra of type $A$ if we change the definition of the shifted affine Yangian. Let us consider the case that $s=2$, $l_2=1$, $q_1=m$ and $q_2=n$. In this case, based on the form of $W^{(r)}_{i,j}$ and Theorem~\ref{May}, it is naturally expected that we can extend homomorphisms $\Phi_1$ and $\Phi_2$ directly to the subalgebra of $Y_{\hbar,\ve}(\widehat{\mathfrak{sl}}(q_1))$ generated by $\Psi^{q_1-q_2,q_1}(Y_{\hbar,\ve}(\widehat{\mathfrak{sl}}(q_1-q_2)))$, $\widetilde{\Psi}^{q_2,q_1}(Y_{\hbar,\ve+(q_1-q_2)\hbar}(\widehat{\mathfrak{sl}}(q_2)))$, $X^+_{m-n,0}$ and $\{E_{m-n,m-n}t^{y}-E_{m-n+1,m-n+1}t^y\mid y\in\mathbb{Z}\}$ and this will lead to the new definition of the shifted affine Yangian. By the definition of $\Psi^{m-n,m}$ and $\widetilde{\Psi}^{n,m}$, we have
\begin{align}
&\quad[\Psi^{m-n,m}(\widetilde{H}_{i,1}),X^+_{m-n,0}]\nonumber\\
&=-\delta_{i,m-n-1}X^+_{m-n,1}+\hbar\displaystyle\sum_{y \geq 0} \limits E_{i,m-n+1}t^{-y-1} E_{m-n,i}t^{y+1}\\
&\quad-\hbar\displaystyle\sum_{y \geq 0} \limits E_{i+1,m-n+1}t^{-y-1} E_{m-n,i+1}t^{y+1}+\delta_{i,m-n-1}\hbar\displaystyle\sum_{y \geq 0}\limits\sum_{u=m-n+1}^m E_{m-n,u}t^{-y-1} E_{u,m-n+1}t^{y+1}\label{Ee1}
\end{align}
and
\begin{align}
&\quad[\widetilde{\Psi}^{n,m}(\widetilde{H}_{i,1}),X^-_{m-n,0}]\nonumber\\
&=-\delta_{i,1}X^+_{m-n,1}+\hbar\displaystyle\sum_{y \geq 0}\limits E_{m-n,m-n+i}t^{-y-1}E_{m-n+i,m-n+1}t^{y+1}\nonumber\\
&\quad-\delta_{i,1}\hbar\displaystyle\sum_{y \geq 0}\limits\sum_{u=1}^{m-n} E_{u,m-n+1}t^{-y-1}E_{m-n,u}t^{y+1}-\hbar\displaystyle\sum_{y \geq 0}\limits E_{m-n,m-n+i+1}t^{-y-1}E_{m-n+i+1,m-n+1}t^{y+1}.\label{Ee2}
\end{align}
Thus, we find that $X^+_{m-n,1}$ is contained in this subalgebra.

The form of this extension is expected as follows:
\begin{gather*}
\Phi(X^{+,(r)}_{i,0})=\begin{cases}
\widetilde{W}^{(1),r}_{m+1-i,m-i}&\text{ if }i\neq0,\\
\widetilde{W}^{(1),r}_{m-n,m}t&\text{ if }i=0,
\end{cases}
\Phi(X^{-,(r)}_{i,0})=\begin{cases}
\widetilde{W}^{(1),r}_{m-i,m+1-i}&\text{ if }i\neq0,\\
\widetilde{W}^{(1),r}_{m,m-n}t^{-1}&\text{ if }i=0,
\end{cases}\\
\Phi(X^+_{m-n,0})=W^{(1)}_{m-n,m-n+1},\\
\Phi(E_{m-n,m-n}t^{y}-E_{m-n+1,m-n+1}t^y)=W^{(1)}_{m-n,m-n}t^y-W^{(1)}_{m-n+1,m-n+1}t^y
\end{gather*}
and
\begin{align*}
\Phi(\widetilde{H}^{(r)}_{i,1})&=-\hbar(\widetilde{W}^{(2),r}_{i,i}t-\widetilde{W}^{(2),r}_{i+1,i+1}t)-\dfrac{i}{2}\hbar\Phi(H_{i,0})-\dfrac{\hbar}{2}(\widetilde{W}^{(1),r}_{i,i})^2-\dfrac{\hbar}{2}(\widetilde{W}^{(1),r}_{i+1,i+1})^2\\
&\quad+\hbar\displaystyle\sum_{y \geq 0}  \limits\displaystyle\sum_{u=1}^{i}\limits \widetilde{W}^{(1),r}_{i,u}t^{-y}\widetilde{W}^{(1),r}_{u,i}t^y+\hbar\displaystyle\sum_{s \geq 0} \limits\displaystyle\sum_{u=i+1}^{m-n}\limits \widetilde{W}^{(1),r}_{i,u}t^{-y-1} \widetilde{W}^{(1),r}_{u,i}t^{y+1}\\
&\quad-\hbar\displaystyle\sum_{s \geq 0}\limits\displaystyle\sum_{u=1}^{i}\limits \widetilde{W}^{(1),r}_{i+1,u}t^{-y} \widetilde{W}^{(1),r}_{u,i+1}t^y-\hbar\displaystyle\sum_{s \geq 0}\limits\displaystyle\sum_{u=i+1}^{m-n} \limits \widetilde{W}^{(1),r}_{i+1,u}t^{-y-1} \widetilde{W}^{(1),r}_{u,i+1}t^{y+1}\text{ for }r=1,2,
\end{align*}
where we denote $\Psi^{m-n,m}(A_{i,r})$ and $\widetilde{\Psi}^{n,m}(A_{i,r})$ by $A^{(1)}_{i,r}$ and $A^{(2)}_{i,r}$ for $A=X^\pm,H$. 
By Theorem~\ref{Tho1}, we obtain
\begin{align*}
&\quad[\Phi(\widetilde{H}^{(1)}_{i,1}),\Phi(X^+_{m-n,0})]\\
&=\delta_{i,m-n-1}\hbar W^{(2)}_{m-n,m-n+1}t+\hbar(W^{(1)}_{m-n,i})_{(-1)}W^{(1)}_{i,m-n+1}t-\hbar(W^{(1)}_{m-n,i+1})_{(-1)}W^{(1)}_{i+1,m-n+1}t\\
&\quad+\delta_{i,m-n-1}\dfrac{m-n-1}{2}\hbar W^{(1)}_{m-n,m-n+1}-\delta_{i,m+n-1}\dfrac{\hbar}{2}\{W^{(1)}_{m-n,m-n},W^{(1)}_{m-n,m-n+1}\}\\
&\quad+\hbar\displaystyle\sum_{y \geq 0} \limits W^{(1)}_{i,m-n+1}t^{-y-1} W^{(1)}_{m-n,i}t^{y+1}\\
&\quad-\delta_{i,m-n-1}\hbar\displaystyle\sum_{y \geq 0}\limits\displaystyle\sum_{u=1}^{i} \limits W^{(1)}_{m-n,u}t^{-y} W^{(1)}_{u,m-n+1}t^{y}\\
&\quad-\delta_{i,m-n-1}\hbar\displaystyle\sum_{y \geq 0}\limits\displaystyle\sum_{u=i+1}^{m-n} \limits W^{(1)}_{m-n,u}t^{-y-1} W^{(1)}_{u,m-n+1}t^{y+1}\\
&\quad-\hbar\displaystyle\sum_{y \geq 0}\limits W^{(1)}_{i+1,m-n+1}t^{-y-1} W^{(1)}_{m-n,i+1}t^{y+1}.
\end{align*}
By the definition of the universal enveloping algebra, we have
\begin{align*}
&\quad(W^{(1)}_{m-n,i})_{(-1)}W^{(1)}_{i,m-n+1}t\\
&=\sum_{y\geq0}\limits (W^{(1)}_{m-n,i}t^{-y-1}W^{(1)}_{i,m-n+1}t^{y+1}+W^{(1)}_{i,m-n+1}t^{-y}W^{(1)}_{m-n,i}t^y),\\
&\quad(W^{(1)}_{m-n,i+1})_{(-1)}W^{(1)}_{i+1,m-n+1}t\\
&=\sum_{y\geq0}\limits (W^{(1)}_{m-n,i+1}t^{-y-1}W^{(1)}_{i+1,m-n+1}t^{y+1}+W^{(1)}_{i+1,m-n+1}t^{-y}W^{(1)}_{m-n,i+1}t^y),
\end{align*}
Thus, we obtain
\begin{align*}
&\quad[\Phi(\widetilde{H}^{(1)}_{i,1}),\Phi(X^+_{m-n,0})]\\
&=\delta_{i,m-n-1}\hbar W^{(2)}_{m-n,m-n+1}t+\delta_{i,m-n-1}\dfrac{m-n-1}{2}\hbar W^{(1)}_{m-n,m-n+1}\\
&\quad+\hbar W^{(1)}_{m-n,i}W^{(1)}_{i,m-n+1}+\hbar \sum_{y\geq0}\limits W^{(1)}_{i,m-n+1}t^{-y-1}W^{(1)}_{m-n,i}t^{y+1}\\
&\quad-\hbar W^{(1)}_{m-n,i+1}W^{(1)}_{i+1,m-n+1}-\hbar \sum_{y\geq0}\limits W^{(1)}_{i+1,m-n+1}t^{-y-1}W^{(1)}_{m-n,i+1}t^{y+1}\\
&\quad-\delta_{i,m+n-1}\dfrac{\hbar}{2}\{W^{(1)}_{m-n,m-n},W^{(1)}_{m-n,m-n+1}\}\\
&\quad-\delta_{i,m-n-1}\hbar\displaystyle\sum_{y \geq 0}\limits\displaystyle\sum_{u=1}^{i} \limits W^{(1)}_{m-n,u}t^{-y} W^{(1)}_{u,m-n+1}t^{y}\\
&\quad-\delta_{i,m-n-1}\hbar\displaystyle\sum_{s \geq 0}\limits\displaystyle\sum_{u=i+1}^{m-n} \limits W^{(1)}_{m-n,u}t^{-y-1} W^{(1)}_{u,m-n+1}t^{y+1}.
\end{align*}
Similarly, we can obtain
\begin{align*}
&\quad[\Phi(\widetilde{H}^{(2)}_{i,1}),\Phi(X^-_{m-n,0})]\\
&=\delta_{i,1}\hbar W^{(2)}_{m-n,m-n+1}t+\sum_{y\geq0}\hbar(W^{(1)}_{m-n,m-n+i})t^{-y-1}W^{(1)}_{m-n+i,m-n+1}t^{y+1}\\
&\quad-\sum_{y\geq0}\hbar(W^{(1)}_{m-n,m-n+i+1})t^{-y-1}W^{(1)}_{m-n+i+1,m-n+1}t^{y+1}\\
&\quad+\delta_{i,1}\dfrac{1}{2}\hbar W^{(1)}_{m-n,m-n+1}-\delta_{i,1}\dfrac{\hbar}{2}\{\widetilde{W}^{(2),2}_{1,1},W^{(1)}_{m-n,m-n+1}\}\\
&\quad-\delta_{i,1}\hbar\displaystyle\sum_{y \geq 0}  \limits\displaystyle\sum_{u=1}^{i}\limits W^{(1)}_{m-n,m-n+u}t^{-y}W^{(1)}_{m-n+u,m-n+1}t^y\\
&\quad-\delta_{i,1}\hbar\displaystyle\sum_{y \geq 0}  \limits\displaystyle\sum_{u=i+1}^{m-n}\limits W^{(1)}_{m-n,m-n+u}t^{-y}W^{(1)}_{m-n+u,m-n+1}t^y.
\end{align*}
Thus, we cannot determine $\Phi(X^+_{m-n,1})$, which is compatible with the relations \eqref{Ee1} and \eqref{Ee2}. This means that we cannot extend the homomorphism $\Phi_1$ and $\Phi_2$ directly to the subalgebra of $Y_{\hbar,\ve}(\widehat{\mathfrak{sl}}(m))$.
We expect that these results of computation will lead to the new definition of the shifted affine Yangian.
\appendix
\section{Relationship between edge contractions for the quantum affine algebra and the finite Yangian}
Let $q$ be a non-zero complex number satisfying that $q^a\neq 1$ for any $a\in\mathbb{Z}\setminus\{0\}$.
\begin{Definition}\label{ho}
Let $n\geq 3$. The quantum affine algebra $U_q(\widehat{\mathfrak{sl}}(n))$ is an associative algebra generated by 
\begin{equation*}
\{e_i,f_i,t^{\pm1}_i\mid0\leq i\leq n-1\}
\end{equation*}
with the commutator relations:
\begin{gather*}
t_it_j=t_jt_i,\ t_it^{-1}_i=t^{-1}_it_i=1\text{ if }0\leq i\leq n-1,\\
t_ie_j=q^{a_{i,j}}e_jt_i,\ t_if_j=q^{-a_{i,j}}f_jt_i\text{ if }0\leq i,j\leq n-1,\\
[e_i,f_j]=\delta_{i,j}\dfrac{t_i-t^{-1}_i}{q-q^{-1}}\text{ if }0\leq i,j\leq n-1,\\
[e_i,e_j]=0,\ [f_i,f_j]=0\text{ if }a_{i,j}=0,\\
[e_i,[e_i,e_j]_{q^{-1}}]_q=0,\ [f_i,[f_i,f_j]_{q^{-1}}]_q=0\text{ if }a_{i,j}=-1,
\end{gather*}
where 
\begin{gather*}
[a]_q=\dfrac{q^a-q^{-a}}{q-q^{-1}}\text{ for }a\in\mathbb{Z},\ 
[x,y]_{q^a}=xy-q^ayx
\end{gather*}
\end{Definition}
By the definition of $U_q(\widehat{\mathfrak{sl}}(n))$, we have an isomorphism 
\begin{equation*}
Ro\colon U_q(\widehat{\mathfrak{sl}}(n))\to U_q(\widehat{\mathfrak{sl}}(n))
\end{equation*}
given by
\begin{equation*}
e_i\mapsto e_{n-i-1},\ f_i\mapsto f_{n-i-1},\ t_i\mapsto t_{n-i-1}.
\end{equation*}

Li \cite{Li} constructed an embedding from the quantum affine algebra $U_q(\widehat{\mathfrak{sl}}(n))$ to  the quantum affine algebra $U_q(\widehat{\mathfrak{sl}}(n+1))$.
\begin{Theorem}[Theorem 2.1.1 in \cite{Li}]
Let $\ve=\pm1$ and $0\leq r\leq n-1$. There exists an embedding
\begin{equation*}
\phi\colon U_q(\widehat{\mathfrak{sl}}(n))\to U_q(\widehat{\mathfrak{sl}}(n+1))
\end{equation*}
given by
\begin{gather*}
e_i\mapsto \begin{cases}
e_i&\text{ if }0\leq i\leq r-1,\\
[e_r,e_{r+1}]_{q^\ve}&\text{ if }i=r,\\
e_{i+1}&\text{ if }r+1\leq i\leq n-1,
\end{cases}\\
f_i\mapsto \begin{cases}
f_i&\text{ if }0\leq i\leq r-1,\\
[f_{r+1},f_r]_{q^{-\ve}}&\text{ if }i=r,\\
f_{i+1}&\text{ if }r+1\leq i\leq n-1,
\end{cases}
t_i\mapsto\begin{cases}
t_i&\text{ if }0\leq i\leq r-1,\\
t_rt_{r+1}&\text{ if }i=r,\\
t_{i+1}&\text{ if }r+1\leq i\leq n-1.
\end{cases}
\end{gather*}
\end{Theorem}
There exists another presentation of the quantum affine algebra $U_q(\widehat{\mathfrak{sl}}(n))$ given in Theorem 4.7, \cite{Beck}. Let us consider the homomorphism $\widetilde{\phi}=Ro^{-1}\circ\phi\circ Ro$.
By the definition of $\widetilde{\phi}$, we can rewrite
\begin{gather*}
\widetilde{\phi}(e_i)=\begin{cases}
e_i&\text{ if }0\leq i\leq n-r-2,\\
[e_{i+1},e_{i}]_{q^\ve}&\text{ if }i=n-r-1,\\
e_{i+1}&\text{ if }n-r\leq i\leq n-1,
\end{cases}\\
\widetilde{\phi}(f_i)=\begin{cases}
f_i&\text{ if }0\leq i\leq n-r-2,\\
[f_{i},f_{i+1}]_{q^{-\ve}}&\text{ if }i=n-r-1,\\
f_{i+1}&\text{ if }n-r\leq i\leq n-1,
\end{cases}
\widetilde{\phi}(t_i)=\begin{cases}
t_i&\text{ if }0\leq i\leq n-r-2,\\
t_{i+1}t_{i}&\text{ if }i=n-r-1,\\
t_{i+1}&\text{ if }n-r\leq i\leq n-1.
\end{cases}
\end{gather*}
Let us consider the case that $\ve=-1$ and $r=0$. By using the presentation of Theorem~4.7 in \cite{BK}, we can rewrite $\widetilde{\phi}$ as
\begin{equation*}
x^\pm_{i,0}\mapsto x^\pm_{i,0},\ k_i\mapsto k_i,\ q^c\mapsto q^c
\end{equation*}
and
\begin{align*}
h_{1,1}&\mapsto-[\cdots[[e_{0},e_{n}]_{q^{-1}},e_{n-1}]_{q^{-1}}\cdots,e_{2}]_{q^{-1}},e_1]_{q^{-2}}=h_{1,1},\\
h_{1,-1}&\mapsto-[f_1,[f_{2},\cdots,[f_{n-1},[f_{n},f_{0}]_q]_q\cdots]_q]_{q^2}=h_{1,-1}.
\end{align*}
Since $U_q(\widehat{\mathfrak{sl}}(n))$ is generated by $x^\pm_{i,0},k_i,q^c,h_{1,\pm1}$, we find that $\psi(A_{i,r})=A_{i,r}$ for $A=h,x^\pm$.
Thus, $\widetilde{\phi}$ is corresponding to $\Psi_f^{n,n+1}$.

Li \cite{Li3} showed that the embedding $\phi$ is compatible with the Schur-Weyl dulaity. In the last of this section, we give the relationship with the homomorphism $\Psi_f^{n,n+1}$ and the Schur-Weyl duality.
Drinfeld \cite{D3} and Arakawa \cite{A3} showed the Schur-Weyl duality for the finite Yangian of type $A$ and the degenerate affine Hecke algebra.
For a right degenerate affine Hecke algebra module $M$, let $\Gamma_n\colon Y(\mathfrak{gl}(n))\to\End(M\otimes_{\mathbb{C}[S_l]}(\mathbb{C}^n)^{\otimes l})$ be a homomorphism determined by the Schur-Weyl duality. We note that $\Gamma_n(t_{i,j}^{(r)})$ can be considered a endomorphism of $M\otimes_{\mathbb{C}[S_l]}(\mathbb{C}^{n+1})^{\otimes l}$. Then, we find the relation
\begin{equation*}
\Gamma_{n+1}\circ\Psi_f^{n,n+1}=\Gamma_n.
\end{equation*}

\section*{Data Availability}
The authors confirm that the data supporting the findings of this study are available within the article and its supplementary materials.
\section*{Declarations}
\subsection*{Funding}
This work was supported by Japan Society for the Promotion of Science Overseas Research Fellowships, Grant Number JP2360303. 
\subsection*{Conflicts of interests/Competing interests}
The authors have no competing interests to declare that are relevant to the content of this article.
\subsection*{Declaration of generative AI in scientific writing}
Authors must not list or cite AI and AI-assisted technologies as an author or co-author on the manuscript since authorship implies responsibilities and tasks that can only be attributed to and performed by humans.

\end{document}